\numberwithin{equation}{section} %% Comment out for sequentially-numbered
\numberwithin{figure}{section} %% Comment out for sequentially-numbered
\numberwithin{equation}{section}
\numberwithin{figure}{section}
\theoremstyle{plain}
\newtheorem{thm}{\protect\theoremname}[section]
  \theoremstyle{plain}
  \newtheorem{assumption}[thm]{\protect\assumptionname}
  \theoremstyle{remark}
  \newtheorem{rem}[thm]{\protect\remarkname}
  \theoremstyle{plain}
  \newtheorem{lem}[thm]{\protect\lemmaname}
  \theoremstyle{plain}
  \newtheorem{prop}[thm]{\protect\propositionname}
  \theoremstyle{plain}
  \newtheorem{cor}[thm]{\protect\corollaryname}
     \def\section{\@startsection{section}{1}%
     \z@{.7\linespacing\@plus\linespacing}{.5\linespacing}%
     {\bfseries%\normalfont\scshape
     \centering
     }}
     \def\@secnumfont{\bfseries}
\newcommand{\Rd}{\mathbb{R}^{d}}
\newcommand{\hs}{\mathbb{R}_+ \times \mathbb{R}^{d}}
\newcommand{\BN}{\mathbb{N}}
\newcommand{\BR}{\mathbb{R}}
\newcommand{\bfE}{\mathbf{E}}
  \providecommand{\corollaryname}{Corollary}
  \providecommand{\lemmaname}{Lemma}
  \providecommand{\propositionname}{Proposition}
  \providecommand{\remarkname}{Remark}
\providecommand{\theoremname}{Theorem}
\providecommand{\assumptionname}{Assumption}
  \providecommand{\corollaryname}{Corollary}
  \providecommand{\lemmaname}{Lemma}
  \providecommand{\propositionname}{Proposition}
  \providecommand{\remarkname}{Remark}
\providecommand{\theoremname}{Theorem}
  \providecommand{\corollaryname}{Corollary}
  \providecommand{\lemmaname}{Lemma}
  \providecommand{\propositionname}{Proposition}
  \providecommand{\remarkname}{Remark}
\providecommand{\theoremname}{Theorem}
  \providecommand{\corollaryname}{Corollary}
  \providecommand{\lemmaname}{Lemma}
  \providecommand{\propositionname}{Proposition}
  \providecommand{\remarkname}{Remark}
\providecommand{\theoremname}{Theorem}
  \providecommand{\assumptionname}{Assumption}
  \providecommand{\corollaryname}{Corollary}
  \providecommand{\lemmaname}{Lemma}
  \providecommand{\propositionname}{Proposition}
  \providecommand{\remarkname}{Remark}
\providecommand{\theoremname}{Theorem}
\begin{document}
\title[Martingale problem for perturbations of Lévy-type generators]{Well-posedness of the martingale problem for non-local perturbations of  Lévy-type generators}
\author[P. Jin]{Peng Jin}

\address{Peng Jin: Fakultät für Mathematik und Naturwissenschaften, Bergische
Universität Wuppertal, 42119 Wuppertal, Germany}

\email{jin@uni-wuppertal.de}

\subjclass[2010]{primary 60J75; secondary 60J35}

\keywords{Lévy-type generator, stable process, martingale problem, transition density, resolvent, perturbation}

\begin{abstract} Let $L$ be a Lévy-type generator whose Lévy measure
is controlled from below by that of a non-degenerate $\alpha$-stable
($0<\alpha<2$) process. In this paper, we study the martingale problem
for the operator $\mathcal{L}_{t}=L+K_{t}$, with $K_{t}$ being a
time-dependent non-local operator defined by
\[
K_{t}f(x):=\int_{\mathbb{R}^{d}\backslash\{0\}}[f(x+y)-f(x)-\mathbf{1}_{\alpha>1}\mathbf{1}_{\{|y|\le1\}}y\cdot\nabla f(x)]M(t,x,dy),
\]
where $M(t,x,\cdot)$ is a Lévy measure on $\mathbb{R}^{d}\backslash\{0\}$
for each $(t,x)\in\hs$. We show that if
\[
\sup_{t\geq0,x\in\mathbb{R}^{d}}\int_{\mathbb{R}^{d}\backslash\{0\}}1\wedge|y|^{\beta}M(t,x,dy)<\infty
\]
for some $0<\beta<\alpha$, then the martingale problem for $\mathcal{L}_{t}$
is well-posed. \end{abstract}

\maketitle

\section{Introduction}

As a generalization of the fractional Laplacian $\triangle^{\alpha/2}$
($0<\alpha<2)$, the anisotropic fractional Laplacian is defined by
\begin{align*}
Af(x) & =\int_{\mathbb{R}^{d}\backslash\{0\}}\left[f(x+y)-f(x)-\mathbf{1}_{\{|y|\le1\}}y\cdot\nabla f(x)\right]\nu(dy),
\end{align*}
where
\[
\nu(B)=\int_{\mathbb{S}^{d-1}}\mu(d\xi)\int_{0}^{\infty}\mathbf{1}_{B}(r\xi)\frac{dr}{r^{1+\alpha}},\quad\forall B\in\mathcal{B}(\mathbb{R}^{d}),
\]
and $\mu$ a is a finite measure on $\mathbb{S}^{d-1}$. We call $\nu$
the Lévy measure and $\mu$ the sprectral measure of $A$. Clearly
the behaivior of the anisotropic fractional Laplacian is solely determined
by its spectral measure. Since $\mu$ can be any finite measure on
$\mathbb{S}^{d-1}$, this leads to some interesting properties of
$A$ that the fractional Laplacian $\triangle^{\alpha/2}$ does not
possess. As an example, the heat kernel of $A$ may have very different
type of estimates compared to $\triangle^{\alpha/2}$, see \cite{MR2286060}.

The anisotropic fractional Laplacian $A$ corresponds to a Markov
process, namely, it is the generator of an $\alpha$-stable process.
It is natural to ask the following question of stability: if we add
a small perturbation $B$ to $A$, does $A+B$ still correspond to
a Markov process, or more precisely, is the martingale problem for
$A+B$ well-posed? This problem has been well-studied when $1<\alpha<2$
and the perturbation operator $B$ is of drift-type $B=b(t,\cdot)\cdot\nabla$.
Depending on the regularity of the spectral measure $\mu$, various
classes of drifts $b$ have been introduced such that the martingale
problem for $A+b(t,\cdot)\cdot\nabla$ is well-posed. If $\mu$ is
the surface measure on $\mathbb{S}^{d-1}$, drifts belonging to the
Kato class $\mathcal{K}_{\alpha-1}^{d}$ were considered in \cite{MR3192504,chen2013uniqueness};
for the case when $\mu$ is non-degenerate, drifts from some Hölder
or $L^{p}$ spaces were treated in \cite{MR2945756,MR3127913,jin2015weak}.

In addition to drift-type perturbations mentioned above, perturbations
of $A$ including a lower order non-local term have also been investigated.
This type of perturbation was first considered in \cite{MR736974}.
There, the perturbation operator $B$ took the form
\[
Bf(x)=\mathbf{1}_{\alpha>1}b(x)\cdot\nabla f(x)+\int_{\mathbb{R}^{d}\backslash\{0\}}[f(x+y)-f(x)-\mathbf{1}_{\alpha>1}\mathbf{1}_{\{|y|\le1\}}y\cdot\nabla f(x)]M(x,dy),
\]
and, under some appropriate conditions on $\mu$, $b$ and $M$, uniqueness
of the martingale problem for $A+B$ was obtained. As an essential
step, some non-local estimates on the resolvent of $A$ were established
in \cite{MR736974}. To obtain these estimates, relatively strong
regularity conditions on the spectral measure $\mu$ were needed.
More precisely, it was assumed in \cite{MR736974} that the spectral
measure $\mu$ has the Radon-Nikodym density $m(y),\ y\in\mathbb{S}^{d-1}$,
with respect to the surface measure on $\mathbb{S}^{d-1},$ and $m(\cdot)$
is $d$-times continuously differentiable on $\mathbb{S}^{d-1}$ and
not identically $0$. Afterwards, similar perturbations of stable-like
operators were considered in \cite{MR1248747,MR3145767,MR3201992,chen2016uniqueness};
among many other things, well-posedness of the corresponding martingale
problem was obtained in \cite{MR3201992,chen2016uniqueness}. We remark
that in \cite{chen2016uniqueness}, the jump measures of the stable-like
operator don't need to have densities with respect to the Lebesgue
measure and are merely assumed to be controlled from above and below,
respectively, by two Lévy measures of non-degenerate $\alpha$-stable
processes.

The anisotropic fractional Laplacian is a special Lévy-type generator.
A general Lévy-type generator is given by
\begin{align}
Lf(x) & =\sum_{i,j=1}^{d}a_{ij}\frac{\partial^{2}}{\partial x_{i}\partial x_{j}}f(x)+b\cdot\nabla f(x)\nonumber \\
 & \qquad+\int_{\mathbb{R}^{d}\backslash\{0\}}\left[f(x+y)-f(x)-\mathbf{1}_{\{|y|\le1\}}y\cdot\nabla f(x)\right]\nu(dy),\label{MPAFLdefiofA-1}
\end{align}
where $(a_{ij})_{1\le i,j\le d}$ is a positive semi-definite symmetric
$d\times d$ matrix, $b\in\Rd$, and $\nu$ is a Lévy measure on $\mathbb{R}^{d}\backslash\{0\}$.
The tuple $\left((a_{ij})_{1\le i,j\le d},b,\nu\right)$ is called
the Lévy triple of $L$. In this paper, we study the martingale problem
for (time-dependent) non-local perturbations of a general Lévy-type
generator whose Lévy measure is controlled from below by that of a
non-degenerate anisotropic fractional Laplacian.

Our main result is  the following:
\begin{thm}
\label{thm: main}Let $L$ be as in \emph{(}\ref{MPAFLdefiofA-1}\emph{)}
and assume that there exist some $\alpha\in(0,2)$ and a non-degenerate
finite measure $\mu$ on $\mathbb{S}^{d-1}$ such that
\begin{equation}
\nu(B)\ge\int_{\mathbb{S}^{d-1}}\mu(d\xi)\int_{0}^{\infty}\mathbf{1}_{B}(r\xi)\frac{dr}{r^{1+\alpha}},\quad\forall B\in\mathcal{B}(\mathbb{R}^{d}).\label{eq:MPAFLdefiofnu-1}
\end{equation}
Define the operator $K_{t}$ by
\begin{equation}
K_{t}f(x):=\int_{\mathbb{R}^{d}\backslash\{0\}}[f(x+y)-f(x)-\mathbf{1}_{\alpha>1}\mathbf{1}_{\{|y|\le1\}}y\cdot\nabla f(x)]M(t,x,dy),\label{first defi: K_t}
\end{equation}
where $M$ is a measurable kernel from $\hs$ to $\mathcal{B}\left(\mathbb{R}^{d}\backslash\{0\}\right)$
and $M(t,x,\cdot)$ is a Lévy measure on $\mathbb{R}^{d}\backslash\{0\}$
for each $(t,x)\in\hs$. If\emph{ }there exists some $\beta\in(0,\alpha)$
such that
\begin{equation}
\sup_{t\geq0,x\in\mathbb{R}^{d}}\int_{\mathbb{R}^{d}\backslash\{0\}}1\wedge|y|^{\beta}M(t,x,dy)<\infty,\label{conditon 2 for K}
\end{equation}
then the martingale problem for $\mathcal{L}_{t}=L+K_{t}$ is well-posed.
\end{thm}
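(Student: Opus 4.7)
The plan is to separate existence and uniqueness and to concentrate the effort on uniqueness, which I attack by a resolvent perturbation argument whose quantitative core is the smoothing of $L$ itself supplied by the non-degenerate $\alpha$-stable lower bound on $\nu$. Writing $Z$ for the L\'evy process generated by $L$, I decompose $Z_t=Y_t+W_t$, where $Y$ is the non-degenerate $\alpha$-stable process whose L\'evy measure is the right-hand side of (\ref{eq:MPAFLdefiofnu-1}) and $W$ is an independent L\'evy process carrying the remainder of the triple $((a_{ij}),b,\nu)$. Since the density $p^L_t$ is the convolution of the smooth stable density $p^Y_t$ with the law of $W_t$, the standard bound $\|\nabla^k p^Y_t\|_1\le C_k t^{-k/\alpha}$ transfers to $p^L_t$. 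Integrating against $e^{-\lambda t}$ then yields, for bounded measurable $f$ and any $s\in(0,\alpha)$,
\[
\|R^L_\lambda f\|_\infty\le\lambda^{-1}\|f\|_\infty,\qquad [R^L_\lambda f]_{C^s(\Rd)}\le C_s\,\lambda^{-(1-s/\alpha)}\|f\|_\infty,
\]
together with an analogous $L^\infty$-bound on $\nabla R^L_\lambda f$ when $s>1$.

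For existence I would approximate $K_t$ by truncating small jumps: set $M_n(t,x,dy):=\mathbf{1}_{\{|y|>1/n\}}M(t,x,dy)$, so that the associated $K^n_t$ is a bounded perturbation of $L$ (after a drift re-centering when $\alpha>1$). The martingale problem for $L+K^n_t$ is then solvable by classical means, for instance by attaching a Poisson random measure with intensity $M_n$ to the L\'evy process generated by $L$. Tightness of the resulting laws on Skorokhod space follows from (\ref{conditon 2 for K}) and Aldous's criterion, and a limit law is shown to solve the martingale problem for $\mathcal{L}_t$ by passing to the limit in the defining martingales and applying dominated convergence to the compensator.

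For uniqueness I would use the space-time analogue of the resolvent estimate. Given smooth, compactly supported $f$ on $[0,T]\times\Rd$, solve $(\partial_t+L)u=-f$ with $u(T,\cdot)=0$ by $u(t,x)=\int_t^T P^L_{s-t}f(s,\cdot)(x)\,ds$, so that for any $s\in(\max(1,\beta),\alpha)$,
\[
\sup_{t\in[0,T]}\bigl(\|u(t,\cdot)\|_\infty+[u(t,\cdot)]_{C^s(\Rd)}\bigr)\le C\,T^{1-s/\alpha}\|f\|_\infty.
\]
Applied to $u$ along any solution $(X_t)$ of the martingale problem for $\mathcal{L}_t$, Dynkin's formula gives
\[
u(0,x_0)=\bfE^{x_0}\!\int_0^T\! f(s,X_s)\,ds-\bfE^{x_0}\!\int_0^T\!(K_s u(s,\cdot))(X_s)\,ds.
\]
Estimating the integrand of $K_s u$ by $|y|^s[u]_{C^s}$ on $\{|y|\le1\}$ (via the H\"older bound and, when $\alpha>1$, the second-order expansion around $\nabla u$) and by $2\|u\|_\infty$ on $\{|y|>1\}$, one obtains, together with (\ref{conditon 2 for K}),
\[
\sup_{t\in[0,T]}\|K_t u(t,\cdot)\|_\infty\le C'\,T^{1-s/\alpha}\|f\|_\infty.
\]
For $T$ small enough the $K$-term in Dynkin's formula becomes a strict contraction, so that $\bfE^{x_0}\int_0^T f(s,X_s)\,ds$ is recovered from $f$ and $x_0$ alone by a Neumann iteration. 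Iterating over successive time windows of length $T$ gives uniqueness of all finite-dimensional distributions of $X$, and well-posedness of the full martingale problem then follows by the standard Stroock--Varadhan argument.

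The hardest step I anticipate is the sharp resolvent smoothing combined with the treatment of the compensator in $K_t$. Transferring $\nabla^k p^Y_t$ bounds to $p^L_t$ without losing the $t^{-k/\alpha}$ scaling must cope with arbitrary Gaussian and large-jump components of $\nu$; this I would do by differentiating under the expectation in $P^L_t g(x)=\bfE[P^Y_t g(x+W_t)]$. More delicate is the regime $\beta>1$, allowed whenever $\alpha>1$: since $\beta<\alpha-1$ is not assumed, the cancellation from the subtracted first-order term in $K_t$ must be used through $|u(x+y)-u(x)-y\cdot\nabla u(x)|\le C|y|^s[u]_{C^s}$ with $s\in(1,\alpha)$, and keeping the constants sharp enough to preserve the $T^{1-s/\alpha}$ contraction is the main technical hurdle.
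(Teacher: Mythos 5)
Your uniqueness argument is essentially the paper's, transposed from the Laplace-transformed picture to a short-time contraction: the paper builds the time-space resolvent $G_{\lambda}=\sum_{k}R_{\lambda}(KR_{\lambda})^{k}$ and makes $KR_{\lambda}$ a contraction for large $\lambda$ via exactly the H\"older estimates you describe (obtained there through the fractional operator $|\partial|^{\delta}$ and Komatsu's identities rather than through $\|\nabla^{k}p_{t}\|_{1}$, but the content is the same, including the convolution transfer from the stable density to $p_{t}$ and the case split $1+\delta\in(\beta\vee 1,\alpha)$ when $\alpha>1$). One technical point you should not gloss over: in the Neumann iteration the second and later iterates involve $K_{s}u$, which is only bounded measurable, so the corresponding $u_{2}$ is not a $C_{b}^{1,2}$ test function and Dynkin's formula cannot be applied to it directly. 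The paper circumvents this by proving the one-step identity first for $g\in C_{b}^{1,2}$ (where $R_{\lambda}g\in C_{b}^{1,2}$) and then extending the identity itself to all bounded measurable $g$ by a monotone class argument; your time-domain version needs the same device.

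The genuine gap is in the existence proof, and it stems from the fact that $M(t,x,\cdot)$ is only a \emph{measurable} kernel in $x$ while $(a_{ij})$ may be degenerate. First, after truncating small jumps your re-centering drift $c_{\delta}(t,x)=\int_{\{|y|\le1\}}yM_{n}(t,x,dy)$ (present whenever $\alpha>1$) is merely bounded measurable; the martingale problem for $\sum a_{ij}\partial_{ij}+(b-c_{\delta})\cdot\nabla$ with degenerate $a$ and measurable drift is not covered by the classical existence theory, which is precisely why the paper mollifies $M$ in the $x$-variable to make $c_{\delta}$ Lipschitz before invoking Stroock--Varadhan and Stroock's perturbation theorem. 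Second, and more seriously, your limit passage fails as stated: $x\mapsto K_{u}f(x)$ (and $x\mapsto K^{n}_{u}f(x)$) is only Borel measurable, so weak convergence $\mathbf{P}_{n}^{s,x}\Rightarrow\mathbf{P}^{s,x}$ does not yield $\mathbf{E}_{n}^{s,x}\big[\int K^{n}_{u}f(X_{u})du\big]\to\mathbf{E}^{s,x}\big[\int K_{u}f(X_{u})du\big]$, and dominated convergence is of no help here since the obstruction is the discontinuity of the integrand, not its size. The paper's Step 3 resolves this by combining the mollification (so that $K_{n,u}f$ is uniformly close to the \emph{continuous} function $(K_{u}f)\ast\phi_{n}$, Lemma 5.1) with a Krylov-type estimate, uniform in $n$, that controls $\mathbf{E}_{n}^{s,x}\int|g(u,X_{u})|du$ by an $L^{p}$-norm of $g$ (Proposition 4.3 and Corollary 5.5); this uniform $L^{p}$ bound is itself a nontrivial consequence of the resolvent estimates and is entirely absent from your proposal. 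Without some substitute for these two ingredients your existence argument does not close.
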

Note that the maxtrix $(a_{ij})_{1\le i,j\le d}$ in (\ref{MPAFLdefiofA-1})
is not assumed to be non-degenerate in Theorem \ref{thm: main}. Indeed,
if $(a_{ij})_{1\le i,j\le d}$ is non-degenerate, then by the classical
results of Stroock \cite{MR0433614}, the assumption (\ref{conditon 2 for K})
in Theorem \ref{thm: main} can be relaxed to $\sup_{t\geq0,x\in\mathbb{R}^{d}}\int_{\mathbb{R}^{d}\backslash\{0\}}1\wedge|y|^{2}M(t,x,dy)<\infty$.
Here we are more interested in the case where $(a_{ij})_{1\le i,j\le d}$
is degenerate and the non-local part of $L$ acts as the leading term.

The novelty of our Theorem \ref{thm: main}, compared to the results
of \cite{MR736974,MR3201992,chen2016uniqueness} in this direction,
lies firstly in the fact that the generator $L$ here contains a possibly
degenerate diffusion part. As far as the author knows, non-local perturbations
of this kind of Lévy-type generators have not yet been considered.
Another point we would like to mention is that the Lévy measure $\nu$
of $L$ is only required to satisfy the lower bound condition (\ref{conditon 2 for K}),
which is weaker than those assumed in the above mentioned works. As
a compensation, our assumption (\ref{conditon 2 for K}) on the perturbing
jump kernel $M(t,x,\cdot)$, which guarantees that $K_{t}$ is a lower
order perturbation of $L$, is actually slightly stronger than those
in \cite{MR736974,MR3201992}.

Our strategy to prove the asserted uniqueness is motivated by the
method of Komatsu in \cite{MR736974}. We will derive some non-local
estimates of the resolvent of $L$. Since our assumption on the Lévy
measure $\nu$ is much weaker than that of \cite{MR736974}, together
with the presence of the possibly degenerate diffusion part of $L$
and the time-dependency of the kernel $M(t,x,\cdot)$, our arguments
are technically more involved. To obtain the existence, we will first
consider smooth approximations $\mathcal{L}_{n,t}$ of $\mathcal{L}_{t}$
and then derive some Krylov's estimates for the martingale solutions
corresponding to $\mathcal{L}_{n,t}$. It turns out that the limit
point (under the topology of weak convergence for measures) of these
martingale solutions exists and solves the martingale problem for
$\mathcal{L}_{t}$.

The rest of the paper is organized as follows. In Section 2 we give
some notation and recall the definition of the martingale problem
for non-local generators. In Section 3 we establish some estimates
on the time-space resolvent of the Lévy process with generator $L$.
In Section 4 we construct the time-space resolvent corresponding to
$\mathcal{L}_{t}$. Finally, we prove Theorem \ref{thm: main} in
Section 5.

\section{Preliminaries}

The inner product of $x$ and $y$ in $\mathbb{R}^{d}$ is written
as $x\cdot y$. We use $|v|$ to denote the Euclidean norm of a vector
$v\in\BR^{m}$, $m\in\BN$. For a bounded function $g:\hs\to\BR^{m}$
we write $\|g\|:=\sup_{(s,x)\in\hs}|g(s,x)|$. Let $\mathbb{S}^{d-1}:=\{x\in\mathbb{R}^{d}:|x|=1\}$
be the unitary sphere.

Let $C_{b}^{2}(\mathbb{R}^{d})$ denote the class of $C^{2}$ functions
such that the function and its first and second order partial derivatives
are bounded. Note that $C_{b}^{2}(\mathbb{R}^{d})$ is a Banach space
endowed with the norm
\[
\|f\|_{C_{b}^{2}(\mathbb{R}^{d})}:=\|f\|+\sum_{i=1}^{d}\|\partial_{i}f\|+\sum_{i,j=1}^{d}\|\partial_{ij}^{2}f\|,\quad f\in C_{b}^{2}(\mathbb{R}^{d}),
\]
where $\partial_{i}f(x):=\partial_{x_{i}}f(x)$ and $\partial_{ij}^{2}f(x):=\partial_{x_{i}x_{j}}^{2}f(x)$
for $x\in\Rd$. For $k\in\BN$ and $k\ge3$, the space $C_{b}^{k}(\Rd)$
and the norm on $C_{b}^{k}(\Rd)$ are similarly defined.

Consider a Lévy-type generator
\begin{align}
Lf(x) & =\sum_{i,j=1}^{d}a_{ij}\frac{\partial^{2}}{\partial x_{i}\partial x_{j}}f(x)+b\cdot\nabla f(x)\nonumber \\
 & \qquad+\int_{\mathbb{R}^{d}\backslash\{0\}}\left[f(x+y)-f(x)-\mathbf{1}_{\{|y|\le1\}}y\cdot\nabla f(x)\right]\nu(dy),\label{MPAFLdefiofA}
\end{align}
defined for every $f\in C_{b}^{2}(\Rd)$, where $(a_{ij})_{1\le i,j\le d}$
is a positive semi-definite symmetric $d\times d$ matrix, $b\in\Rd$,
and $\nu$ is a Lévy measure on $\mathbb{R}^{d}\backslash\{0\}.$

Throughout this paper, we assume that the generator $L$ satisfies
the following assumption.
\begin{assumption}
\label{MPAFLass21}There exist $\alpha\in(0,2)$ and a non-degenerate
finite measure $\mu$ on $\mathbb{S}^{d-1}$ such that
\begin{equation}
\nu(B)\ge\int_{\mathbb{S}^{d-1}}\mu(d\xi)\int_{0}^{\infty}\mathbf{1}_{B}(r\xi)\frac{dr}{r^{1+\alpha}},\quad\forall B\in\mathcal{B}(\mathbb{R}^{d}).\label{eq:MPAFLdefiofnu}
\end{equation}
\end{assumption}
By non-degeneracy of $\mu$ we mean that the support of $\mu$ is
not contained in a proper linear subspace of $\mathbb{R}^{d}$.
\begin{rem}
Since we don't assume additional conditions on $(a_{ij})_{1\le i,j\le d}$,
the matrix $(a_{ij})_{1\le i,j\le d}$ can be degenerate.
\end{rem}
Recall that $K_{t}$ is given by
\begin{equation}
K_{t}f(x)=\int_{\mathbb{R}^{d}\backslash\{0\}}[f(x+y)-f(x)-\mathbf{1}_{\alpha>1}\mathbf{1}_{\{|y|\le1\}}y\cdot\nabla f(x)]M(t,x,dy),\label{MPAFLdefiofKt}
\end{equation}
where $M$ is a kernel from $\hs$ to $\mathcal{B}\left(\mathbb{R}^{d}\backslash\{0\}\right)$
with $M(t,x,\cdot)$ being a Lévy measure on $\mathbb{R}^{d}\backslash\{0\}$
for each $(t,x)\in\hs$. Without any further specification, we will
always assume the following:
\begin{assumption}
\label{eq:MAPAFLassonM}There exists $\beta\in(0,\alpha)$ such that
\[
\sup_{t\geq0,x\in\mathbb{R}^{d}}\int_{\mathbb{R}^{d}\backslash\{0\}}1\wedge|y|^{\beta}M(t,x,dy)<\infty.
\]

\end{assumption}
Let
\begin{equation}
\mathcal{L}_{t}:=L+K_{t},\label{MPAFLgeneratorlt}
\end{equation}
where $L$ and $K_{t}$ are defined in (\ref{MPAFLdefiofA}) and (\ref{MPAFLdefiofKt}),
respectively.

Let $D=D\big([0,\infty)\big)$, the set of paths in $\mathbb{R}^{d}$
that are right continuous with left limits, endowed with the Skorokhod
topology. Set $X_{t}(\omega)=\omega(t)$ for $\omega\in D$ and let
$\mathcal{D}=\sigma(X_{t}:0\le t<\infty)$ and $\mathcal{F}_{t}:=\sigma(X_{r}:0\le r\le t)$.
A probability measure $\mathbf{P}$ on $(D,\mathcal{D})$ is called
a solution to the martingale problem for $\mathcal{L}_{t}$ starting
from $(s,x)$, if
\begin{equation}
\mathbf{P}(X_{t}=x,\ \forall t\le s)=1\label{MPAFLwsintegrcondi}
\end{equation}
and under the measure $\mathbf{P}$,
\begin{equation}
f(X_{t})-\int_{s}^{t}\mathcal{L}_{u}f(X_{u})du,\quad t\ge s,\label{MPAFLeq2defimp}
\end{equation}
is an $\mathcal{F}_{t}$-martingale after time $s$ for all $f\in C_{b}^{2}(\mathbb{R}^{d})$.

\section{Estimates on the time-space resolvent of the Lévy process with generator
$L$}

In this section we consider a $d$-dimensional Lévy process $S=(S_{t})_{t\geq0}$
with generator $L$ that is defined in (\ref{MPAFLdefiofA}). So $S$
has the Lévy triple $((a_{ij})_{1\le i,j\le d},b,\nu)$, namely,
\begin{align}
\bfE\big[e^{iS_{t}\cdot u}\big] & =e^{-t\psi(u)},\quad u\in\Rd,\nonumber \\
\psi(u) & =\sum_{i,j=1}^{d}a_{ij}u_{i}u_{j}-\int_{\mathbb{R}^{d}\setminus\{0\}}\Big(e^{iu\cdot y}-1-\mathbf{1}_{\{|y|\le1\}}iu\cdot y\Big)\nu(dy)-ib\cdot u,\label{MPAFLeqsect21}
\end{align}
where $(a_{ij})_{1\le i,j\le d}$, $b$ and $\nu$ are the same as
in (\ref{MPAFLdefiofA}).

Let $\alpha\in(0,2)$ and $\mu$ be as in Assumption \ref{MPAFLass21}.
Define
\begin{equation}
\tilde{\nu}(B)=\int_{\mathbb{S}^{d-1}}\mu(d\xi)\int_{0}^{\infty}\mathbf{1}_{B}(r\xi)\frac{dr}{r^{1+\alpha}},\quad B\in\mathcal{B}(\mathbb{R}^{d}),\label{defi: nu tilde}
\end{equation}
and
\begin{equation}
\tilde{\psi}(u)=-\int_{\mathbb{R}^{d}\setminus\{0\}}\Big(e^{iu\cdot y}-1-\mathbf{1}_{\{|y|\le1\}}iu\cdot y\Big)\tilde{\nu}(dy),\quad u\in\Rd.\label{defi: psi tilde}
\end{equation}
Then $\tilde{\psi}$ is the characteristic exponent of an $\alpha$-stable
process $\tilde{S}=(\tilde{S}_{t})_{t\geq0}$. Let $\hat{\psi}:=\psi-\tilde{\psi}$.
So $\hat{\psi}$ is the characteristic exponent of a Lévy process
$\hat{S}=(\hat{S}_{t})_{t\geq0}$ with the Lévy triple $(A,b,\nu-\tilde{\nu})$.
Without loss of generality, we assume that $S$, $\tilde{S}$ and
$\hat{S}$ are defined on the same probability space.

Define
\begin{equation}
\gamma:=\begin{cases}
-\int_{\{0<|y|\le1\}}y\tilde{\nu}(dy), & 0<\alpha<1,\\
\int_{\mathbb{S}^{d-1}}\xi\mu(d\xi), & \alpha=1,\\
\int_{\{|y|>1\}}y\tilde{\nu}(dy), & 1<\alpha<2.
\end{cases}\label{defi, gamma}
\end{equation}
Then for $\alpha\neq1$, the function $\tilde{\psi}(u)+iu\cdot\gamma$
becomes a homogeneous function (with variable $u$) of index $\alpha$.
As a result, for $\alpha\neq1$, we obtain
\begin{equation}
\tilde{\psi}(\rho u)+i(\rho u\cdot\gamma)=\rho^{\alpha}(\tilde{\psi}(u)+i(u\cdot\gamma)),\quad\forall\rho>0.\label{MPAFLeqsect215-1}
\end{equation}
The case with $\alpha=1$ is a little different. For $\alpha=1$,
according to \cite[p.~84,~(14.20)]{MR1739520} and its complex conjugate,
it holds that

\[
\tilde{\psi}(u)=\int_{\mathbb{S}^{d-1}}\left(\frac{\pi}{2}|u\cdot\xi|+iu\cdot(\xi\log|u\cdot\xi|)-ic_{1}u\cdot\xi\right)\mu(d\xi),\quad u\in\mathbb{R}^{d},
\]
where $c_{1}=\int_{1}^{\infty}r^{2}\sin rdr+\int_{0}^{1}r^{-2}(\sin r-r)dr$;
in this case, we have
\begin{equation}
\tilde{\psi}(\rho u)=\rho\tilde{\psi}(u)+i(\rho\log\rho)u\cdot\gamma,\quad\forall\rho>0,\ u\in\mathbb{R}^{d}.\label{MPAFLeqsect217}
\end{equation}

According to Assumption \ref{MPAFLass21} and \cite[Prop.~24.20]{MR1739520},
there exists some constant $c_{2}>0$ such that
\begin{equation}
\left|e^{-t\tilde{\psi}(u)}\right|\le e^{-c_{2}t|u|^{\alpha}},\quad\forall u\in\Rd,\ t>0.\label{MPAFLsect31}
\end{equation}
By the inversion formula of Fourier transform, the law of $\tilde{S}_{t}$
has a density $\tilde{p}_{t}\in L^{1}(\mathbb{R}^{d})\cap C_{b}(\mathbb{R}^{d})$
that is given by
\begin{equation}
\tilde{p}_{t}(x)=\frac{1}{(2\pi)^{d}}\int_{\mathbb{R}^{d}}e^{-iu\cdot x}e^{-t\tilde{\psi}(u)}du,\quad x\in\mathbb{R}^{d},\ t>0.\label{defi: p tilde}
\end{equation}
Moreover, according to \cite[p.~2856,~(2.3)]{MR2286060}, we have
the following scaling property for $\tilde{p}_{t}$: for $x\in\mathbb{R}^{d},\ t>0,$
\begin{equation}
\tilde{p}_{t}(x)=\begin{cases}
t^{-d/\alpha}\tilde{p}_{1}(t^{-1/\alpha}x+(1-t^{1-1/\alpha})\gamma),\quad & (\alpha\neq1),\\
t^{-d}\tilde{p}_{1}(t^{-1}x-\gamma\log t), & (\alpha=1),
\end{cases}\label{MPAFLsect315}
\end{equation}
 where $\gamma$ is given in (\ref{defi, gamma}).

The following result is a slight extension of \cite[Lemma 3.1]{MR2945756}.
For its proof the reader is referred to \cite[Lemma 3.1]{jin2015weak}.
\begin{lem}
\label{lemma: p_t tilde}Let $t>0$ be arbitrary. Then the densities
$\tilde{p}_{t}\in C_{b}^{\infty}(\Rd)\cap L^{r}(\Rd)$ for all $r\ge1$.
\end{lem}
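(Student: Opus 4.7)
The plan is to exploit the Fourier inversion representation (\ref{defi: p tilde}) of $\tilde{p}_{t}$ together with the Gaussian-type exponential bound (\ref{MPAFLsect31}) on $|e^{-t\tilde{\psi}(u)}|$. Both the smoothness and the $L^{r}$ integrability will follow from this setup in a single stroke.

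For the claim $\tilde{p}_{t}\in C_{b}^{\infty}(\Rd)$, I would differentiate under the integral sign arbitrarily many times. For any multi-index $\sigma$ and any $x\in\Rd$, the bound
\[
\bigl|u^{\sigma}e^{-iu\cdot x}e^{-t\tilde{\psi}(u)}\bigr|\le|u|^{|\sigma|}e^{-c_{2}t|u|^{\alpha}}
\]
holds uniformly in $x$, and the right-hand side is integrable on $\Rd$ for every fixed $t>0$ because of the super-polynomial decay furnished by (\ref{MPAFLsect31}). Standard dominated convergence then legitimises moving derivatives under the integral, giving
\[
\partial^{\sigma}\tilde{p}_{t}(x)=\frac{(-i)^{|\sigma|}}{(2\pi)^{d}}\int_{\mathbb{R}^{d}}u^{\sigma}e^{-iu\cdot x}e^{-t\tilde{\psi}(u)}\,du,
\]
which is bounded in $x$ by $(2\pi)^{-d}\int_{\Rd}|u|^{|\sigma|}e^{-c_{2}t|u|^{\alpha}}\,du<\infty$. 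Iterating over $\sigma$ yields $\tilde{p}_{t}\in C_{b}^{\infty}(\Rd)$.

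For the $L^{r}$ claim, I would use that $\tilde{p}_{t}$ is the density of the probability law of $\tilde{S}_{t}$, so $\tilde{p}_{t}\ge0$ and $\|\tilde{p}_{t}\|_{L^{1}(\Rd)}=1$. The estimate above with $\sigma=0$ simultaneously gives $\tilde{p}_{t}\in L^{\infty}(\Rd)$. Since $\|\tilde{p}_{t}\|_{L^{r}}^{r}\le\|\tilde{p}_{t}\|_{L^{\infty}}^{r-1}\|\tilde{p}_{t}\|_{L^{1}}$ for every $r\ge1$, the desired $L^{r}$ inclusion follows immediately.

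The main substance of the statement is actually absorbed into the hypothesis: once (\ref{MPAFLsect31}) is available, which rests on the non-degeneracy of the spectral measure $\mu$ in Assumption \ref{MPAFLass21}, the remaining steps are standard Fourier analysis and the ``hard part'' reduces to verifying the dominated-convergence hypotheses that legitimise differentiating under the integral to arbitrary order. If one wanted to track $t$-uniform behaviour (which is not needed here since $t$ is fixed), the scaling identity (\ref{MPAFLsect315}) would additionally let one reduce all such bounds to the single case $t=1$.
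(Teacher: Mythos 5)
Your proof is correct. Note that the paper does not actually supply its own argument here -- it defers to \cite[Lemma 3.1]{jin2015weak} -- so your self-contained version is a welcome filling-in rather than a deviation. The two halves of your argument are both sound: differentiation under the integral sign is justified by the dominating function $|u|^{|\sigma|}e^{-c_{2}t|u|^{\alpha}}$, which is integrable for every multi-index $\sigma$ and every fixed $t>0$ thanks to (\ref{MPAFLsect31}); and the $L^{r}$ claim follows from interpolating the probability density $\tilde{p}_{t}$ between $L^{1}$ (norm $1$, since $e^{-t\tilde{\psi}}\in L^{1}(\Rd)$ guarantees that the Fourier inversion formula (\ref{defi: p tilde}) produces the continuous, nonnegative density of the law of $\tilde{S}_{t}$) and $L^{\infty}$ (bounded by $(2\pi)^{-d}\int_{\Rd}e^{-c_{2}t|u|^{\alpha}}du$). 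It is worth observing that this simple interpolation trick is only available for $\tilde{p}_{t}$ itself; the analogous $L^{r}$ statement for $|\partial|^{\delta}\tilde{p}_{t}$ in Lemma \ref{lem: lp esti for delta p_t} requires the much more elaborate Schwartz-space decomposition carried out there, because $|\partial|^{\delta}\tilde{p}_{t}$ is no longer a nonnegative function with unit mass. Your closing remark that the scaling identity (\ref{MPAFLsect315}) would reduce matters to $t=1$ is accurate but, as you say, unnecessary for the fixed-$t$ statement.
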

Since
\[
\bfE\big[e^{iS_{t}\cdot u}\big]=e^{-t\psi(u)}=e^{-t\tilde{\psi}(u)}e^{-t\hat{\psi}(u)}=\bfE\big[e^{i\tilde{S}_{t}\cdot u}\big]\bfE\big[e^{i\hat{S}_{t}\cdot u}\big],
\]
the law of $S_{t}$ has a density $p_{t}$ that is given by
\begin{equation}
p_{t}(x):=\int_{\Rd}\tilde{p}_{t}(x-y)\hat{m}_{t}(dy),\quad x\in\mathbb{R}^{d},\ t>0,\label{defi: p}
\end{equation}
where $\hat{m}_{t}$ denotes the law of $\hat{S}_{t}$. It follows
from Lemma \ref{lemma: p_t tilde} that $p_{t}\in C_{b}^{\infty}(\Rd)\cap L^{r}(\Rd)$
for all $r\ge1$.

For $0<\delta<1$, define the integro-differential operator $|\partial|^{\delta}$
by

\[
|\partial|^{\delta}f(x)=c_{3}\int_{\mathbb{R}^{d}\backslash\{0\}}\left[f(x+y)-f(x)\right]\cdot|y|^{-d-\delta}dy,\quad f\in C_{b}^{2}(\Rd),
\]
where the constant $c_{3}$ is given by
\[
c_{3}:=2^{\delta}\pi^{-d/2}\Gamma\left(\frac{d+\delta}{2}\right)/\Gamma\left(-\frac{\delta}{2}\right).
\]
Note that
\begin{equation}
c_{3}\int_{\mathbb{R}^{d}\setminus\{0\}}\Big(e^{iu\cdot y}-1\Big)|y|^{-d-\delta}dy=-|u|^{\delta},\quad u\in\mathbb{R}^{d}.\label{eq: u^delta}
\end{equation}

Next, we give an estimate of the $L^{r}$-norm of $|\partial|^{\delta}p_{t}$.
\begin{lem}
\label{lem: lp esti for delta p_t}Let $0<\delta<1$ and $r\ge1$.
Then there exists a constant $c_{4}>0$ that depends on $\delta$
and $r$ such that
\begin{equation}
\||\partial|^{\delta}p_{t}\|_{L^{r}(\Rd)}\le c_{4}t^{(d/r-\delta-d)/\alpha},\quad\forall t>0.\label{eq:MPAFLgtxl1norm}
\end{equation}
\end{lem}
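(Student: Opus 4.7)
The plan is to reduce the problem to the corresponding estimate for the $\alpha$-stable density $\tilde{p}_1$ at time $1$, using both the convolution representation (\ref{defi: p}) and the scaling property (\ref{MPAFLsect315}).

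First I would use that $\hat{m}_t$ is a probability measure. The convolution formula $p_t(x) = \int \tilde{p}_t(x-y)\, \hat{m}_t(dy)$ allows one to pull $|\partial|^{\delta}$ inside the integral (the integrand in the defining difference quotient is dominated uniformly in $x$ by an $L^1$ function in $y$, after splitting at $|y|=1$ and using the boundedness of $\tilde{p}_t$ together with Lemma \ref{lemma: p_t tilde}), giving
\[
|\partial|^{\delta} p_t(x) \;=\; \int_{\mathbb{R}^d} (|\partial|^{\delta}\tilde{p}_t)(x-y)\, \hat{m}_t(dy).
\]
Minkowski's inequality for integrals then yields $\||\partial|^{\delta} p_t\|_{L^r}\le \||\partial|^{\delta}\tilde{p}_t\|_{L^r}$, so it suffices to prove (\ref{eq:MPAFLgtxl1norm}) with $p_t$ replaced by $\tilde{p}_t$.

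Next I would exploit the scaling (\ref{MPAFLsect315}). For $\alpha\neq 1$, a direct change of variables $z=t^{-1/\alpha}y$ in the integral defining $|\partial|^{\delta}\tilde p_t$ yields
\[
|\partial|^{\delta}\tilde p_t(x) \;=\; t^{-(d+\delta)/\alpha}\,(|\partial|^{\delta}\tilde p_1)\!\left(t^{-1/\alpha}x + (1-t^{1-1/\alpha})\gamma\right),
\]
and an analogous identity holds for $\alpha=1$ with the affine shift replaced by the logarithmic one from (\ref{MPAFLsect315}). Taking $L^r$ norms and performing a translation-invariant change of variables $w=t^{-1/\alpha}x+\cdots$ (Jacobian $t^{d/\alpha}$) produces exactly the announced exponent:
\[
\||\partial|^{\delta}\tilde p_t\|_{L^r}\;=\;t^{(d/r-\delta-d)/\alpha}\,\||\partial|^{\delta}\tilde p_1\|_{L^r}.
\]

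The remaining step, which I regard as the main technical content, is to show $\||\partial|^{\delta}\tilde p_1\|_{L^r}<\infty$ for every $r\ge 1$. Here I would split the defining integral at $|y|=1$. For the inner piece, use $\tilde p_1(x+y)-\tilde p_1(x)=\int_0^1 y\cdot\nabla\tilde p_1(x+sy)\,ds$ and Minkowski's inequality to bound its $L^r$ norm by $\|\nabla\tilde p_1\|_{L^r}\int_{|y|\le 1}|y|^{1-d-\delta}\,dy$, which is finite because $\delta<1$; note that $\nabla\tilde p_1\in L^r$ for every $r$, because by Lemma \ref{lemma: p_t tilde} and the Fourier-side decay $|e^{-\tilde{\psi}(u)}|\le e^{-c_2|u|^\alpha}$ from (\ref{MPAFLsect31}) one has $\nabla\tilde p_1\in L^1\cap L^\infty$. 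For the outer piece, rewrite it as $(\tilde p_1\ast g)(x)-\tilde p_1(x)\int_{|y|>1}|y|^{-d-\delta}\,dy$ with $g(y)=\mathbf{1}_{\{|y|>1\}}|y|^{-d-\delta}\in L^1(\mathbb{R}^d)$; Young's convolution inequality bounds the $L^r$ norm by $\|\tilde p_1\|_{L^r}\|g\|_{L^1}$, and the subtracted term is in $L^r$ by Lemma \ref{lemma: p_t tilde}.

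The only place I would expect to be fiddly is the scaling computation in the case $\alpha=1$, where (\ref{MPAFLsect315}) involves a logarithmic shift rather than an affine one; however the intermediate shift disappears under the translation-invariant change of variables used to extract the $L^r$ norm, so the same exponent $(d/r-\delta-d)/\alpha$ results.
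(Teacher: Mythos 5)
Your reduction to $\tilde p_1$ is the same as the paper's: you pass from $p_t$ to $\tilde p_t$ via the convolution with $\hat m_t$ (Minkowski where the paper uses Jensen -- equivalent), and you extract the factor $t^{(d/r-\delta-d)/\alpha}$ by scaling, noting correctly that the affine (or logarithmic) shift in (\ref{MPAFLsect315}) is killed by the translation-invariant change of variables. The one genuine divergence is in the key step, the finiteness of $\||\partial|^{\delta}\tilde p_1\|_{L^r}$. The paper works entirely on the Fourier side: it writes $|u|^{\delta}e^{-\tilde\psi(u)}$ as a sum of products of Schwartz functions and characteristic functions of finite measures (splitting both $|u|^{\delta}$ and $\tilde\psi$ at $|y|=1$) and then applies Young's inequality to the resulting convolutions. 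You instead split the defining integral of $|\partial|^{\delta}$ in physical space at $|y|=1$, handling the near part by first-order Taylor plus Minkowski (using $\delta<1$ to integrate $|y|^{1-d-\delta}$) and the far part by Young's convolution inequality with $\mathbf 1_{\{|y|>1\}}|y|^{-d-\delta}\in L^1$. This is more elementary and avoids the Schwartz-space machinery; the trade-off is that it consumes the full strength of $\delta<1$ (the Fourier route is what the paper reuses essentially verbatim for Lemma \ref{lem: detla i p_t}, where the operator $|\partial|^{\delta}\partial_i$ has order $\delta+1>1$ and your Taylor argument would need a second-order version). The only point you should tighten is the claim $\nabla\tilde p_1\in L^1(\Rd)$: Lemma \ref{lemma: p_t tilde} as stated only gives $\tilde p_1\in C_b^{\infty}\cap L^r$, and the Fourier decay (\ref{MPAFLsect31}) applied to $\widehat{\partial_i\tilde p_1}(u)=-iu_ie^{-\tilde\psi(u)}$ yields boundedness of $\partial_i\tilde p_1$, not its integrability. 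The fact is true and is contained in the fuller statement of the cited \cite[Lemma 3.1]{MR2945756} (density and all derivatives in $L^1$), or can be proved by the paper's own Fourier decomposition with $|u|^{\delta}$ replaced by $u_i$; you should cite or supply one of these rather than inferring it from decay of the Fourier transform.
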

\begin{proof}
Since $|\partial|^{\delta}p_{t}(x)=\int_{\Rd}|\partial|^{\delta}\tilde{p}_{t}(x-y)\hat{m}_{t}(dy)$,
$t>0$, by Jensen's inequality, it suffices to prove
\[
\||\partial|^{\delta}\tilde{p}_{t}\|_{L^{r}(\Rd)}\le t^{(d/r-\delta-d)/\alpha}\||\partial|^{\delta}\tilde{p}_{1}\|_{L^{r}(\Rd)}<\infty,\quad\forall t>0.
\]
By (\ref{MPAFLsect31}), (\ref{defi: p tilde}) and Fubini's theorem,
we easily obtain that for each $t>0$,

\begin{equation}
|\partial|^{\delta}\tilde{p}_{t}(x)=-\frac{1}{(2\pi)^{d}}\int_{\mathbb{R}^{d}}|u|^{\delta}e^{-t\tilde{\psi}(u)}e^{-iu\cdot x}du,\quad x\in\mathbb{R}^{d}.\label{eq:MPAFLsect316}
\end{equation}

We first assume $\alpha\neq1$. Using a change of variables $u=t^{-1/\alpha}u'$
and noting (\ref{MPAFLeqsect215-1}), we obtain
\begin{align*}
|\partial|^{\delta}\tilde{p}_{t}(x)= & -\frac{t^{-d/\alpha}}{(2\pi)^{d}}\int_{\Rd}t^{-\delta/\alpha}|u'|^{\delta}e^{-(\tilde{\psi}(u')+iu'\cdot\gamma)+it^{1-1/\alpha}u'\cdot\gamma}e^{-it^{-1/\alpha}u'\cdot x}du'\\
= & t^{-(\delta+d)/\alpha}|\partial|^{\delta}\tilde{p}_{1}\left(t^{-1/\alpha}x-\gamma(t^{1-1/\alpha}-1)\right).
\end{align*}
So
\begin{align}
\||\partial|^{\delta}\tilde{p}_{t}\|_{L^{r}(\Rd)} & \le t^{-(\delta+d)/\alpha}\left(\int_{\Rd}\left(|\partial|^{\delta}\tilde{p}_{1}(t^{-1/\alpha}x)\right)^{r}dx\right)^{1/r}\nonumber \\
 & =t^{(d/r-\delta-d)/\alpha}\||\partial|^{\delta}\tilde{p}_{1}\|_{L^{r}(\Rd)}.\label{MPAFLeqremark3156}
\end{align}
For the case $\alpha=1$, we can apply (\ref{MPAFLeqsect217}) and
a similar argument as above to also obtain (\ref{MPAFLeqremark3156}).
So (\ref{MPAFLeqremark3156}) is true for all $\alpha\in(0,2)$.

It remains to show that $\||\partial|^{\delta}\tilde{p}_{1}\|_{L^{r}(\Rd)}<\infty$,
or equivalently,
\begin{equation}
\int_{\Rd}\Big|\int_{\Rd}|u|^{\delta}e^{-\tilde{\psi}(u)}e^{-iu\cdot y}du\Big|^{r}dy<\infty.\label{eq2:MPAFLlemma6udel}
\end{equation}
To prove this fact, we use the same idea as in the proof of \cite[Lemma~3.4]{jin2015weak}.
Firstly, note that the characteristic exponent $\tilde{\psi}$ can
be written as the sum of $\tilde{\psi}_{1}$ and $\tilde{\psi}_{2}$,
where
\[
\tilde{\psi}_{1}(u)=-\int_{\{0<|y|\le1\}}\Big(e^{iu\cdot y}-1-iu\cdot y\Big)\tilde{\nu}(dy),\quad\tilde{\psi}_{2}=\tilde{\psi}-\tilde{\psi}_{1}.
\]
We can easily check that that $\tilde{\psi}_{1}\in C^{\infty}(\mathbb{R}^{d})$.
Since (\ref{MPAFLsect31}) holds, we see that $\exp(-\tilde{\psi}_{1})$
belongs to the Schwartz space $\mathcal{S}(\mathbb{R}^{d})$.

According to (\ref{eq: u^delta}), we can write $|u|^{\delta}=\psi_{\delta,1}(u)+\psi_{\delta,2}(u)+\psi_{\delta,3}$,
where
\[
\psi_{\delta,1}(u)=-c_{\delta}\int_{\{0<|y|\le1\}}\left(e^{iu\cdot y}-1\right)|y|^{-d-\delta}dy
\]
and
\[
\psi_{\delta,2}(u)=-c_{\delta}\int_{\{|y|>1\}}e^{iu\cdot y}|y|^{-d-\delta}dy,\quad\psi_{\delta,3}=c_{\delta}\int_{\{|y|>1\}}|y|^{-d-\delta}dy.
\]
Then
\begin{align}
|u|^{\delta}e^{-\tilde{\psi}}= & \psi_{\delta,1}e^{-\tilde{\psi}_{1}}e^{-\tilde{\psi}_{2}}+\psi_{\delta,2}e^{-\tilde{\psi}_{1}}e^{-\tilde{\psi}_{2}}+\psi_{\delta,3}e^{-\tilde{\psi}_{1}}e^{-\tilde{\psi}_{2}}\nonumber \\
= & \psi_{\delta,1}e^{-\tilde{\psi}_{1}}e^{-\tilde{\psi}_{2}}-e^{-\tilde{\psi}_{1}}(-\psi_{\delta,2})e^{-\tilde{\psi}_{2}}+\psi_{\delta,3}e^{-\tilde{\psi}_{1}}e^{-\tilde{\psi}_{2}}.\label{eq1:MPAFLlemma6udel}
\end{align}

We only treat the first term on the right-hand side of (\ref{eq1:MPAFLlemma6udel}),
since the other two terms are similar. With the same reason as for
$\exp(-\tilde{\psi}_{1})$ above, we have $\psi_{\delta,1}\exp(-\tilde{\psi}_{1})\in\mathcal{S}(\mathbb{R}^{d})$.
It is also easy to see that $\exp(-\tilde{\psi}_{2})$ is bounded
and is the characteristic function of an infinitely divisible probability
measure $\rho$ on $\mathbb{R}^{d}$. As a consequence, we are allowed
to define $h$ to be the inverse Fourier transform of the $\psi_{\delta,1}\exp(-\tilde{\psi})$,
i.e.,
\[
h(y):=\frac{1}{(2\pi)^{d}}\int_{\Rd}\psi_{\delta,1}e^{-\tilde{\psi}_{1}}e^{-\tilde{\psi}_{2}}e^{-iu\cdot y}du,\quad y\in\Rd.
\]
Since the Fourier transform is a one-to-one map of $\mathcal{S}(\mathbb{R}^{d})$
onto itself, we can find $f\in\mathcal{S}(\mathbb{R}^{d})$ with $\hat{f}$=$\psi_{\delta,1}\exp(-\tilde{\psi}_{1})$,
where $\hat{f}$ denotes the Fourier transform of $f$. In particular,
we have $f\in L^{r}(\mathbb{R}^{d})$. Let $f*\rho$ be the convolution
of $f$ and $\rho.$ We have
\[
\ \widehat{f*\rho}=\hat{f}\hat{\rho}=\psi_{\delta,1}e^{-\tilde{\psi}_{1}-\tilde{\psi}_{2}}=\psi_{\delta,1}e^{-\tilde{\psi}}=\hat{h},
\]
which implies $h=f*\rho$. Thus $h\in C_{b}^{\infty}(\mathbb{R}^{d})$.
By Young's inequality, we get $h\in L^{r}(\mathbb{R}^{d})$, i.e.,
\begin{equation}
\int_{\Rd}\Big|\int_{\Rd}\psi_{\delta,1}e^{-\tilde{\psi}(u)}e^{-iu\cdot y}du\Big|^{r}dy<\infty.\label{eq3:MPAFLlemma6udel}
\end{equation}
Similarly, by noting that $-\psi_{\delta,2}$ and $\exp(-\tilde{\psi}_{2})$
are both characteristic functions of some finite measures on $\Rd$,
we can show that
\begin{equation}
\int_{\Rd}\Big|\int_{\Rd}e^{-\tilde{\psi}_{1}}(-\psi_{\delta,2})e^{-\tilde{\psi}_{2}}e^{-iu\cdot y}du\Big|^{r}dy<\infty\label{eq4:MPAFLlemma6udel}
\end{equation}
and
\begin{equation}
\int_{\Rd}\Big|\int_{\Rd}\psi_{\delta,3}e^{-\tilde{\psi}_{1}}e^{-\tilde{\psi}_{2}}e^{-iu\cdot y}du\Big|^{r}dy<\infty.\label{eq5:MPAFLlemma6udel}
\end{equation}
Now, the inequality (\ref{eq2:MPAFLlemma6udel}) follows from (\ref{eq1:MPAFLlemma6udel}),
(\ref{eq3:MPAFLlemma6udel}), (\ref{eq4:MPAFLlemma6udel}) and (\ref{eq5:MPAFLlemma6udel}).
\end{proof}
\begin{rem}
If we understand $|\delta|^{0}$ as the identity map, then Lemma \ref{lem: lp esti for delta p_t}
holds also for the case $\delta=0$, namely, for each $r\ge1$, there
exists a constant $c_{4}>0$ depending on $r$ such that
\begin{equation}
\|p_{t}\|_{L^{r}(\Rd)}\le c_{4}t^{(d/r-d)/\alpha},\quad\forall t>0.\label{esti: lp of p_t}
\end{equation}
Indeed, the proof of Lemma \ref{lem: lp esti for delta p_t} can be
easily adapted to work also for this case.
\end{rem}
In the next lemma we deal with a non-local estimate on the gradient
of $p_{t}$ when $1<\alpha<2$. Since its proof is completely similar
to that of Lemma \ref{lem: lp esti for delta p_t}, so we omit it
here.
\begin{lem}
\label{lem: detla i p_t}Let $1<\alpha<2$ , $0<\delta<\alpha-1$
and $r\ge1$. Then there exists a constant $c_{5}>0$ which depends
on $\delta$ and $r$ such that for each $i=1,\cdots,d$,
\[
\||\partial|^{\delta}\partial_{i}p_{t}\|_{L^{r}(\Rd)}\le c_{5}t^{(d/r-\delta-1-d)/\alpha},\quad\forall t>0.
\]
\end{lem}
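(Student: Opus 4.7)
The plan is to follow verbatim the three-step strategy used in the proof of Lemma \ref{lem: lp esti for delta p_t}, with the only change coming from the extra Fourier factor $u_{i}$ contributed by $\partial_{i}$. First, since $p_{t}=\tilde p_{t}*\hat m_{t}$ and $|\partial|^{\delta}\partial_{i}$ commutes with translation, one has
\[
|\partial|^{\delta}\partial_{i}p_{t}(x)=\int_{\Rd}(|\partial|^{\delta}\partial_{i}\tilde p_{t})(x-y)\,\hat m_{t}(dy),
\]
so by Jensen's inequality it suffices to prove the bound with $\tilde p_{t}$ in place of $p_{t}$.

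Second, Fourier inversion together with (\ref{eq: u^delta}) gives
\[
|\partial|^{\delta}\partial_{i}\tilde p_{t}(x)=\frac{i}{(2\pi)^{d}}\int_{\Rd}u_{i}|u|^{\delta}e^{-t\tilde\psi(u)}e^{-iu\cdot x}\,du.
\]
Since $1<\alpha<2$, I would apply the scaling identity (\ref{MPAFLeqsect215-1}) with the substitution $u=t^{-1/\alpha}u'$; compared with the analogous step in the proof of Lemma \ref{lem: lp esti for delta p_t}, the linear factor $u_{i}=t^{-1/\alpha}u_{i}'$ contributes an additional factor $t^{-1/\alpha}$, yielding
\[
|\partial|^{\delta}\partial_{i}\tilde p_{t}(x)=t^{-(1+\delta+d)/\alpha}\,(|\partial|^{\delta}\partial_{i}\tilde p_{1})\bigl(t^{-1/\alpha}x-(t^{1-1/\alpha}-1)\gamma\bigr),
\]
and hence $\||\partial|^{\delta}\partial_{i}\tilde p_{t}\|_{L^{r}(\Rd)}=t^{(d/r-\delta-1-d)/\alpha}\||\partial|^{\delta}\partial_{i}\tilde p_{1}\|_{L^{r}(\Rd)}$ after a further change of variables in the $L^{r}$ integral.

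Third, I would verify $\||\partial|^{\delta}\partial_{i}\tilde p_{1}\|_{L^{r}(\Rd)}<\infty$ by recycling the three-piece decomposition $|u|^{\delta}=\psi_{\delta,1}(u)+\psi_{\delta,2}(u)+\psi_{\delta,3}$ together with $\tilde\psi=\tilde\psi_{1}+\tilde\psi_{2}$ from the earlier proof, now applied to $u_{i}|u|^{\delta}e^{-\tilde\psi(u)}$. In each resulting term the polynomial factor $u_{i}$ is absorbed into the Schwartz factor (either $\psi_{\delta,1}e^{-\tilde\psi_{1}}$ or simply $e^{-\tilde\psi_{1}}$, both of which lie in $\mathcal{S}(\Rd)$, since $\mathcal{S}(\Rd)$ is closed under multiplication by polynomials and (\ref{MPAFLsect31}) applied to $\tilde\psi_{1}$ still forces $e^{-\tilde\psi_{1}}$ to decay faster than any polynomial), while each of $-\psi_{\delta,2}$ and $e^{-\tilde\psi_{2}}$ remains the characteristic function of a finite measure on $\Rd$. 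Young's inequality then delivers, for every piece, an $L^{r}$-bound of the ``Schwartz function convolved with finite measures'' type, exactly as in the proof of Lemma \ref{lem: lp esti for delta p_t}.

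I do not anticipate any substantive obstacle: the argument is a mechanical repetition of that proof, and the only detail deserving care is tracking the additional $t^{-1/\alpha}$ produced by the factor $u_{i}$ under the change of variables, which is precisely what converts the exponent $(d/r-\delta-d)/\alpha$ of Lemma \ref{lem: lp esti for delta p_t} into the $(d/r-\delta-1-d)/\alpha$ asserted here.
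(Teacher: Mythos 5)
Your proposal is correct and is exactly the argument the paper has in mind: the paper omits the proof of this lemma precisely because it is "completely similar" to that of Lemma \ref{lem: lp esti for delta p_t}, and your write-up carries out that repetition faithfully, with the extra factor $u_{i}$ correctly producing the additional $t^{-1/\alpha}$ in the scaling step and being harmlessly absorbed into the Schwartz factors in the finiteness step. No gaps.
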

For $\lambda>0$, the time-space resolvent $R_{\lambda}$ of the Lévy
process $S$ is defined by
\begin{equation}
R_{\lambda}f(t,x):=\int_{0}^{\infty}e^{-\lambda u}\int_{\mathbb{R}^{d}}p_{u}(y-x)f(t+u,y)dydu,\quad(t,x)\in\hs,\label{eq:MPAFLdefiofrlam}
\end{equation}
where $f\in\mathcal{B}_{b}(\mathbb{R}_{+}\times\mathbb{R}^{d})$.

Before we state the next lemma, we recall two equalities from \cite[Lemma 2.1]{MR736974}:
for each $0<\delta<1,$ there exist konstants $c_{6},$$c_{7}>0$,
which depend on $\delta$, such that
\begin{equation}
\int_{\mathbb{R}^{d}}\left|(|w+z|^{\delta-d}-|w|^{\delta-d})\right|dw=c_{6}|z|^{\delta},\label{eq1:Komastu}
\end{equation}
and
\begin{equation}
f(x+z)-f(x)=c_{7}\int_{\mathbb{R}^{d}}(|w+z|^{\delta-d}-|w|^{\delta-d})|\partial|^{\delta}f(x-w)dw,\label{eq2:Komastu}
\end{equation}
where $f\in C_{b}^{\infty}(\mathbb{R}^{d})$ is arbitrary.
\begin{lem}
Assume $0<\delta<\alpha\wedge1.$

\emph{(i)} If $\lambda>0$ and $g\in\mathcal{B}_{b}(\mathbb{R}_{+}\times\mathbb{R}^{d})$,
then $|\partial|^{\delta}\left(R_{\lambda}g(t,\cdot)\right)$ is well-defined
for each $t\ge0$. Moreover, there exists a constant $C_{\lambda}>0$,
independent of $g$, such that
\begin{equation}
\left||\partial|^{\delta}\left(R_{\lambda}g(t,\cdot)\right)(x)\right|\le C_{\lambda}\|g\|\label{new new eq0}
\end{equation}
and
\begin{equation}
|R_{\lambda}g(t,x+z)-R_{\lambda}g(t,x)|\le C_{\lambda}|z|^{\delta}\|g\|\label{new new eq 0.5}
\end{equation}
for all $(t,x)\in\hs$ and $z\in\Rd$. The constant $C_{\lambda}$
goes to $0$ as $\lambda\to\infty$.

\emph{(ii)} Let $T>0$ and $g\in\mathcal{B}_{b}(\mathbb{R}_{+}\times\mathbb{R}^{d})$
be such that $supp(g)\subset[0,T]\times\Rd$ and $g\in L^{q}([0,T];L^{p}(\Rd))$
with $p,q>0$ and $d/p+\alpha/q<\alpha-\delta$. Then for each $\lambda>0$,
there exists a constant $N_{\lambda}>0$, independent of $g$ and
$T$, such that
\begin{equation}
\left||\partial|^{\delta}\left(R_{\lambda}g(t,\cdot)\right)(x)\right|\le N_{\lambda}\|g\|_{L^{q}([0,T];L^{p}(\Rd))}\label{neweq 2: Lemma 3.4}
\end{equation}
and
\begin{equation}
|R_{\lambda}g(t,x+z)-R_{\lambda}g(t,x)|\le N_{\lambda}z^{\delta}\|g\|_{L^{q}([0,T];L^{p}(\Rd))}\label{neweq3: Lemma 3.4}
\end{equation}
for all $(t,x)\in\hs$ and $z\in\Rd$. Moreover, the constant $N_{\lambda}$
goes to $0$ as $\lambda\to\infty$.\label{lem4:MPAFL}
\end{lem}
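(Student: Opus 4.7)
The plan is to push the nonlocal operator $|\partial|^\delta$ through the time-space convolution defining $R_\lambda g$, so that it acts directly on the transition density $p_u$. Using (\ref{eq:MPAFLdefiofrlam}) and Fubini, which is justified by the $L^1$-control on $|\partial|^\delta p_u$ from Lemma \ref{lem: lp esti for delta p_t}, one arrives at the representation
\begin{equation*}
|\partial|^\delta\bigl(R_\lambda g(t,\cdot)\bigr)(x) = \int_0^\infty e^{-\lambda u}\!\int_{\Rd}(|\partial|^\delta p_u)(y-x)\,g(t+u,y)\,dy\,du.
\end{equation*}
Both parts of the lemma then reduce to combining H\"older's inequality with the $L^r$-estimate of Lemma \ref{lem: lp esti for delta p_t}. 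In both cases, the H\"older-type bounds (\ref{new new eq 0.5}) and (\ref{neweq3: Lemma 3.4}) follow directly from the corresponding pointwise estimates on $|\partial|^\delta R_\lambda g$ by applying (\ref{eq2:Komastu}) with $f=R_\lambda g(t,\cdot)$ and then invoking (\ref{eq1:Komastu}), so I will only outline the pointwise bounds.

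For part (i), taking $r=1$ gives $\||\partial|^\delta p_u\|_{L^1(\Rd)}\le c\,u^{-\delta/\alpha}$, and since $\delta<\alpha$ one obtains
\begin{equation*}
\bigl|\,|\partial|^\delta(R_\lambda g(t,\cdot))(x)\bigr| \le c\,\|g\|\int_0^\infty e^{-\lambda u}u^{-\delta/\alpha}\,du = c\,\Gamma(1-\delta/\alpha)\,\lambda^{\delta/\alpha-1}\|g\|,
\end{equation*}
which is (\ref{new new eq0}) with $C_\lambda\to 0$ as $\lambda\to\infty$.

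For part (ii), H\"older in $y$ with exponent $p$ combined with Lemma \ref{lem: lp esti for delta p_t} at $r=p'$ yields
\begin{equation*}
\Bigl|\int_{\Rd}(|\partial|^\delta p_u)(y-x)\,g(t+u,y)\,dy\Bigr| \le c\,u^{-(d/p+\delta)/\alpha}\|g(t+u,\cdot)\|_{L^p(\Rd)},
\end{equation*}
and a further application of H\"older in $u\in[0,T]$ with exponent $q$ gives
\begin{equation*}
\bigl|\,|\partial|^\delta(R_\lambda g(t,\cdot))(x)\bigr|\le c\Bigl(\int_0^\infty e^{-\lambda q'u}u^{-(d/p+\delta)q'/\alpha}\,du\Bigr)^{1/q'}\|g\|_{L^q([0,T];L^p(\Rd))}.
\end{equation*}
The hypothesis $d/p+\alpha/q<\alpha-\delta$ is exactly the condition $(d/p+\delta)q'/\alpha<1$ needed for integrability at $u=0$; a substitution $v=\lambda q' u$ then identifies the bracketed factor as a constant multiple of $\lambda^{(d/p+\delta)/\alpha - 1/q'}$, whose exponent is negative precisely under our hypothesis, so that $N_\lambda\to 0$ as $\lambda\to\infty$ and (\ref{neweq 2: Lemma 3.4}) follows. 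The main technical subtlety is the Fubini step at the outset: because $\||\partial|^\delta p_u\|_{L^1}$ is singular as $u\downarrow 0$, one should either work with a small-radius truncation of the principal-value integral defining $|\partial|^\delta$ and pass to the limit using the integrable control supplied by Lemma \ref{lem: lp esti for delta p_t}, or argue on the Fourier side via the identity (\ref{eq:MPAFLsect316}). After this is secured, the remainder is a routine Gamma-function computation.
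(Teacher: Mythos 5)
Your overall route---pushing $|\partial|^{\delta}$ onto $p_{u}$ via Fubini, then using Lemma \ref{lem: lp esti for delta p_t} with $r=1$ for part (i) and H\"older's inequality in $y$ and in $u$ for part (ii), and recovering the H\"older bounds (\ref{new new eq 0.5}) and (\ref{neweq3: Lemma 3.4}) from (\ref{eq2:Komastu}) and (\ref{eq1:Komastu})---is exactly the paper's, and those computations are fine as far as they go.

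The gap is in the well-definedness/Fubini step, and you have misdiagnosed where the difficulty lies. The singularity $\||\partial|^{\delta}p_{u}\|_{L^{1}(\Rd)}\le c_{4}u^{-\delta/\alpha}$ at $u=0$ is harmless: it is integrable against $e^{-\lambda u}du$ precisely because $\delta<\alpha$. The real problem is the $w$-integral in the definition of $|\partial|^{\delta}$ applied to $R_{\lambda}g(t,\cdot)$: near $w=0$ one must integrate $|R_{\lambda}g(t,x+w)-R_{\lambda}g(t,x)|\,|w|^{-d-\delta}$, and a modulus of continuity of order exactly $|w|^{\delta}$ (which is all that Lemma \ref{lem: lp esti for delta p_t} at exponent $\delta$ combined with (\ref{eq2:Komastu})--(\ref{eq1:Komastu}) yields) produces $|w|^{-d}$, which is not integrable at the origin. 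Your ``truncate and pass to the limit'' variant stalls for the same reason: the natural dominating function for the truncated integrals is again controlled only by $\min\bigl(2,\,c\,u^{-\delta/\alpha}|w|^{\delta}\bigr)|w|^{-d-\delta}$, whose $w$-integral diverges; and the Fourier-side alternative via (\ref{eq:MPAFLsect316}) does not apply, since for merely bounded measurable $g$ the function $R_{\lambda}g(t,\cdot)$ has no usable Fourier representation. The missing idea is a small bootstrap in the order: choose $\epsilon>0$ with $\delta<\delta+\epsilon<\alpha\wedge1$, apply (\ref{eq2:Komastu}) at order $\delta+\epsilon$ to $p_{u}$ together with (\ref{eq1:Komastu}) and (\ref{eq:MPAFLgtxl1norm}) to get $\|p_{u}(\cdot-z)-p_{u}\|_{L^{1}(\Rd)}\le c\,u^{-(\delta+\epsilon)/\alpha}|z|^{\delta+\epsilon}$, and hence $|R_{\lambda}g(t,x+z)-R_{\lambda}g(t,x)|\le c\,(|z|^{\delta+\epsilon}\wedge1)\|g\|$. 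This makes the defining integral for $|\partial|^{\delta}\left(R_{\lambda}g(t,\cdot)\right)(x)$ absolutely convergent (the exponent near $0$ becomes $|w|^{\epsilon-d}$) and simultaneously supplies the absolute integrability needed for the Fubini interchange behind your displayed representation. With that inserted, the rest of your argument goes through.
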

\begin{proof}
(i) Assume $g\in\mathcal{B}_{b}(\mathbb{R}_{+}\times\mathbb{R}^{d})$.
Let $\epsilon>0$ be a constant such that $\delta<\delta+\epsilon<\alpha\wedge1$.
For $z\in\Rd$, we have
\begin{align}
 & |p_{u}(y-x-z)-p_{u}(y-x)|\nonumber \\
\overset{(\ref{eq2:Komastu})}{\le} & c_{7}\int_{\mathbb{R}^{d}}\Big|(|w-z|^{\delta+\epsilon-d}-|w|^{\delta+\epsilon-d})|\partial|^{\delta+\epsilon}p_{u}(y-x-w)\Big|dw.\label{eq3:MPFALdifferofpt}
\end{align}
It follows from (\ref{eq3:MPFALdifferofpt}) and Young's inequality
that
\begin{align}
 & \int_{\mathbb{R}^{d}}|p_{u}(y-x-z)-p_{u}(y-x)|dy\nonumber \\
\le & c_{7}\||\partial|^{\delta+\epsilon}p_{u}\|_{L^{1}(\Rd)}\int_{\mathbb{R}^{d}}|(|w-z|^{\delta+\epsilon-d}-|w|^{\delta+\epsilon-d})|dw\nonumber \\
\overset{(\ref{eq1:Komastu})}{=} & c_{6}c_{7}|z|^{\delta+\epsilon}\||\partial|^{\delta+\epsilon}p_{u}\|_{L^{1}(\Rd)}\overset{(\ref{eq:MPAFLgtxl1norm})}{\le}c_{4}c_{6}c_{7}u^{-(\delta+\epsilon)/\alpha}|z|^{\delta+\epsilon}.\label{ineq:MPAFLformulaI2}
\end{align}
So
\begin{align}
\left|R_{\lambda}g(t,x+z)-R_{\lambda}g(t,x)\right| & \le\|g\|\int_{0}^{\infty}e^{-\lambda u}\int_{\mathbb{R}^{d}}|p_{u}(y-x-z)-p_{u}(y-x)|dydu\nonumber \\
 & \le c_{4}c_{6}c_{7}|z|^{\delta+\epsilon}\|g\|\int_{0}^{\infty}e^{-\lambda u}u^{-(\delta+\epsilon)/\alpha}du.\label{esti 1: R lambda}
\end{align}
On the other hand, we have
\begin{equation}
\left|R_{\lambda}g(t,x+z)-R_{\lambda}g(t,x)\right|\le2\|R_{\lambda}g\|\le2\lambda^{-1}\|g\|.\label{new new eq 2}
\end{equation}
By (\ref{esti 1: R lambda}) and (\ref{new new eq 2}), we can find
a constant $c>0$ such that
\[
\left|R_{\lambda}g(t,x+z)-R_{\lambda}g(t,x)\right|\le c\left(|z|^{\delta+\epsilon}\wedge1\right),\quad\forall z\in\Rd,
\]
which implies that $|\partial|^{\delta}\left(R_{\lambda}g(t,\cdot)\right)(x)$
is well-defined.

By Fubini's theorem, we obtain that for all $t\ge0$ and $x\in\Rd$,
\begin{equation}
|\partial|^{\delta}\left(R_{\lambda}g(t,\cdot)\right)(x)=\int_{0}^{\infty}e^{-\lambda u}\int_{\mathbb{R}^{d}}|\partial|^{\delta}\left(p_{u}(y-\cdot)\right)(x)g(t+u,y)dydu.\label{eq:MPAFLrepreforpartialdelta}
\end{equation}
So for all $t\ge0$ and $x\in\Rd$,
\begin{equation}
\left||\partial|^{\delta}\left(R_{\lambda}g(t,\cdot)\right)(x)\right|\le\|g\|\int_{0}^{\infty}e^{-\lambda u}\left\Vert |\partial|^{\delta}p_{u}\right\Vert _{L^{1}(\Rd)}du\overset{(\ref{eq:MPAFLgtxl1norm})}{\le}C_{\lambda}\|g\|,\label{neweq: Lemma 3.4}
\end{equation}
where
\[
C_{\lambda}:=c_{4}\int_{0}^{\infty}e^{-\lambda u}u^{-\delta/\alpha}du.
\]
Hence (\ref{new new eq0}) is true. It is clear that $C_{\lambda}\downarrow0$
as $\lambda\to\infty$.

It follows from (\ref{eq2:Komastu}) that
\begin{align*}
 & R_{\lambda}g(t,x+z)-R_{\lambda}g(t,x)\\
 & \quad=\int_{0}^{\infty}e^{-\lambda u}\int_{\mathbb{R}^{d}}[p_{u}(y-x-z)-p_{u}(y-x)]g(t+u,y)dydu\\
 & \quad=\int_{0}^{\infty}e^{-\lambda u}\int_{\mathbb{R}^{d}}\Big(c_{7}\int_{\mathbb{R}^{d}}(|w-z|^{\delta-d}-|w|^{\delta-d})|\partial|^{\delta}p_{u}(y-x-w)dw\Big)\\
 & \qquad\quad\times g(t+u,y)dydu.
\end{align*}
In view of (\ref{eq:MPAFLgtxl1norm}), (\ref{eq1:Komastu}) and (\ref{eq:MPAFLrepreforpartialdelta}),
we can apply Fubini's theorem to obtain that for all $t\ge0,\ x,z\in\Rd$,

\begin{equation}
R_{\lambda}g(t,x+z)-R_{\lambda}g(t,x)=c_{7}\int_{\mathbb{R}^{d}}(|w-z|^{\delta-d}-|w|^{\delta-d})|\partial|^{\delta}\left(R_{\lambda}g(t,\cdot)\right)(x-w)dw.\label{eq:eq:MPAFLreprefordifferRlambda}
\end{equation}

Combining (\ref{eq:eq:MPAFLreprefordifferRlambda}), (\ref{eq1:Komastu})
and (\ref{neweq: Lemma 3.4}) yields (\ref{new new eq 0.5}).

(ii) Since (\ref{neweq3: Lemma 3.4}) follows easily from (\ref{eq1:Komastu}),
(\ref{neweq 2: Lemma 3.4}) and (\ref{eq:eq:MPAFLreprefordifferRlambda}),
we only need to prove (\ref{neweq 2: Lemma 3.4}). Note that $supp(g)\subset[0,T]\times\Rd$.
By (\ref{eq:MPAFLrepreforpartialdelta}) and Hölder's inequality,
we get
\begin{align*}
||\partial|^{\delta}\left(R_{\lambda}g(t,\cdot)\right)(x)|= & \Big|\int_{0}^{\infty}e^{-\lambda u}\int_{\mathbb{R}^{d}}|\partial|^{\delta}\left(p_{u}(y-\cdot)\right)(x)g(t+u,y)dydu\Big|\\
\le & \int_{0}^{\infty}e^{-\lambda u}\||\partial|^{\delta}p_{u}\|_{L^{p^{*}}(\Rd)}\|g(t+u,\cdot)\|_{L^{p}(\Rd)}du\\
= & \int_{0}^{T}e^{-\lambda u}\||\partial|^{\delta}p_{u}\|_{L^{p^{*}}(\Rd)}\|g(t+u,\cdot)\|_{L^{p}(\Rd)}du\\
\le & \Big(\int_{0}^{T}e^{-q^{*}\lambda u}\||\partial|^{\delta}p_{u}\|_{L^{p^{*}}(\Rd)}^{q^{*}}dt\Big)^{1/q^{*}}\|g\|_{L^{q}([0,T];L^{p}(\Rd))},
\end{align*}
where $p^{*},q^{*}>0$ are such that $1/p^{*}+1/p=1$ and $1/q^{*}+1/q=1$.
By (\ref{eq:MPAFLgtxl1norm}), we see that the inequality (\ref{neweq 2: Lemma 3.4})
holds with
\[
N_{\lambda}:=\Big(c_{4}\int_{0}^{\infty}e^{-q^{*}\lambda u}u^{q^{*}\alpha^{-1}(d/p^{*}-\delta-d)}du\Big)^{1/q^{*}},
\]
which is finite if $q^{*}\alpha^{-1}(d/p^{*}-\delta-d)>-1$, or equivalently,
$d/p+\alpha/q<\alpha-\delta$. By dominated convergence theorem, $\lim_{\lambda\to\infty}N_{\lambda}=0$.
\end{proof}
\begin{lem}
Let $1<\alpha<2$ and $0<\delta<\alpha-1$.

\emph{(i)} If $\lambda>0$ and $g\in\mathcal{B}_{b}(\mathbb{R}_{+}\times\mathbb{R}^{d})$,
then $|\partial|^{\delta}\left(\partial_{i}R_{\lambda}g(t,\cdot)\right)$
is well-defined for each $t\ge0$. Moreover, there exists a constant
$\tilde{C}_{\lambda}>0$, independent of $g$, such that for all $g\in\mathcal{B}_{b}(\mathbb{R}_{+}\times\mathbb{R}^{d})$,
\[
\left||\partial|^{\delta}\left(\partial_{i}R_{\lambda}g(t,\cdot)\right)(x)\right|\le\tilde{C}_{\lambda}\|g\|
\]
and
\[
|\partial_{i}R_{\lambda}g(t,x+z)-\partial_{i}R_{\lambda}g(t,x)|\le\tilde{C}_{\lambda}|z|^{\delta}\|g\|
\]
for all $(t,x)\in\hs$, $z\in\Rd$ and $i=1,\cdots,d$. The constant
$\tilde{C}_{\lambda}$ goes to $0$ as $\lambda\to\infty$.

\emph{(ii)} Let $T>0$ and $g\in\mathcal{B}_{b}(\mathbb{R}_{+}\times\mathbb{R}^{d})$
be such that $supp(g)\subset[0,T]\times\Rd$ and $g\in L^{q}([0,T];L^{p}(\Rd))$
with $d/p+\alpha/q<\alpha-1-\delta$. Then for each $\lambda>0$,
there exists a constant $\tilde{N}_{\lambda}>0$, independent of $g$
and $T$, such that
\[
\left||\partial|^{\delta}\left(\partial_{i}R_{\lambda}g(t,\cdot)\right)(x)\right|\le\tilde{N}_{\lambda}\|g\|_{L^{q}([0,T];L^{p}(\Rd))}
\]
and
\[
|\partial_{i}R_{\lambda}g(t,x+z)-\partial_{i}R_{\lambda}g(t,x)|\le\tilde{N}_{\lambda}z^{\delta}\|g\|_{L^{q}([0,T];L^{p}(\Rd))}.
\]
for all $(t,x)\in\hs$, $z\in\Rd$ and $i=1,\cdots,d$. Moreover,
the constant $\tilde{N}_{\lambda}$ goes to $0$ as $\lambda\to\infty$.\label{lem:MPAFLdifferrlam}
\end{lem}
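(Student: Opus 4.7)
The proof will proceed in complete parallel to that of Lemma \ref{lem4:MPAFL}, replacing the kernel $p_u(y-x)$ in the resolvent representation by $\partial_i p_u(y-x)$ and invoking Lemma \ref{lem: detla i p_t} in place of Lemma \ref{lem: lp esti for delta p_t}. The starting observation is that when $1<\alpha<2$, the spatial derivative commutes with the time-space resolvent: by Lemma \ref{lem: detla i p_t} with $\delta=0$ (understood as the identity), $\|\partial_i p_u\|_{L^1(\Rd)}\le c u^{-1/\alpha}$, and since $1/\alpha<1$, this is integrable against $e^{-\lambda u}$ near $u=0$. Differentiation under the integral sign then yields
\begin{equation*}
\partial_i R_\lambda g(t,x)=\int_0^\infty e^{-\lambda u}\int_{\Rd}\partial_i p_u(y-x)\,g(t+u,y)\,dy\,du.
\end{equation*}

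For part (i), I would first pick $\epsilon>0$ small enough that $\delta+\epsilon<\alpha-1$, apply (\ref{eq2:Komastu}) to $\partial_i p_u$ with exponent $\delta+\epsilon$, and combine with Lemma \ref{lem: detla i p_t} and Young's inequality exactly as in (\ref{ineq:MPAFLformulaI2}) to get
\begin{equation*}
\int_{\Rd}|\partial_i p_u(y-x-z)-\partial_i p_u(y-x)|\,dy\le c\,u^{-(\delta+1+\epsilon)/\alpha}|z|^{\delta+\epsilon}.
\end{equation*}
Combined with the trivial bound $|\partial_i R_\lambda g|\le c\lambda^{-1+1/\alpha}\|g\|$ (or any uniform bound), this makes $z\mapsto \partial_i R_\lambda g(t,x+z)-\partial_i R_\lambda g(t,x)$ dominated by $|z|^{\delta+\epsilon}\wedge 1$, so that $|\partial|^\delta(\partial_i R_\lambda g(t,\cdot))$ is well-defined. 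Fubini then gives the analogue of (\ref{eq:MPAFLrepreforpartialdelta}), namely
\begin{equation*}
|\partial|^\delta\bigl(\partial_i R_\lambda g(t,\cdot)\bigr)(x)=\int_0^\infty e^{-\lambda u}\int_{\Rd}|\partial|^\delta\partial_i p_u(y-x)\,g(t+u,y)\,dy\,du,
\end{equation*}
and Lemma \ref{lem: detla i p_t} with $r=1$ produces the bound with
\begin{equation*}
\tilde C_\lambda=c_5\int_0^\infty e^{-\lambda u}u^{-(\delta+1)/\alpha}\,du,
\end{equation*}
which is finite precisely because $\delta<\alpha-1$, and which tends to $0$ as $\lambda\to\infty$ by monotone convergence. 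The Hölder-in-$x$ estimate follows by applying (\ref{eq2:Komastu}) once more, now to $\partial_i R_\lambda g$ itself, together with (\ref{eq1:Komastu}) and the uniform bound just obtained, mirroring the passage from (\ref{neweq: Lemma 3.4}) to (\ref{eq:eq:MPAFLreprefordifferRlambda}).

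For part (ii), I would apply Hölder's inequality with conjugate exponents $1/p+1/p^*=1$ and $1/q+1/q^*=1$ in the same representation, reducing the bound to
\begin{equation*}
\tilde N_\lambda=\Bigl(c_5\int_0^\infty e^{-q^*\lambda u}u^{q^*\alpha^{-1}(d/p^*-\delta-1-d)}\,du\Bigr)^{1/q^*},
\end{equation*}
which is finite iff $q^*\alpha^{-1}(d/p^*-\delta-1-d)>-1$, i.e.\ iff $d/p+\alpha/q<\alpha-1-\delta$; again $\tilde N_\lambda\to 0$ as $\lambda\to\infty$ by dominated convergence. The Hölder estimate then follows exactly as in the proof of (\ref{neweq3: Lemma 3.4}). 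The only genuine technical point, as opposed to bookkeeping, is to verify that the stringent gap $\delta<\alpha-1$ suffices to keep all time integrals convergent at $u=0$; once this is checked the rest is a transcription of the argument for Lemma \ref{lem4:MPAFL} with one extra derivative in the kernel.
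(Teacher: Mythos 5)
Your proposal is correct and is exactly the argument the paper intends: the paper's own proof consists of the representation $\partial_i R_\lambda g$ via $\partial_i p_u$ followed by the remark that one argues as in Lemma \ref{lem4:MPAFL} using Lemma \ref{lem: detla i p_t}, and you have carried out precisely that transcription, with the correct exponent bookkeeping (the convergence conditions $\delta+\epsilon<\alpha-1$ and $d/p+\alpha/q<\alpha-1-\delta$ check out). The only cosmetic discrepancy is a sign in the representation of $\partial_i R_\lambda g$ (the paper has a minus from differentiating $p_u(y-x)$ in $x$), which affects none of the estimates.
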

\begin{proof}
Let $g\in\mathcal{B}_{b}(\mathbb{R}_{+}\times\mathbb{R}^{d})$ be
arbitrary. It is easy to see that for each $i=1,\cdots,d,$
\[
\partial_{i}R_{\lambda}g(t,x)=-\int_{0}^{\infty}e^{-\lambda u}\int_{\mathbb{R}^{d}}\partial_{i}p_{u}(y-x)g(t+u,y)dydu.
\]
In view of Lemma \ref{lem: detla i p_t}, we can argue in the same
way as in Lemma \ref{lem4:MPAFL} to derive the statements. We omit
the details.
\end{proof}

\section{Construction of the time-space resolvent corresponding to $\mathcal{L}_{t}$}

In this section we give a purely analytical construction of the time-space
resolvent $G_{\lambda}$ that corresponds to the generator $\mathcal{L}_{t}:=L+K_{t}$.
Not to be precise, we can write $G_{\lambda}=(\lambda-\partial_{t}-\mathcal{L}_{t})^{-1}$.
The main aim of this section is to establish rigorously, at least
for large enough $\lambda>0$, that
\[
G_{\lambda}g=\sum_{k=0}^{\infty}R_{\lambda}(KR_{\lambda})^{k}g,\quad g\in\mathcal{B}_{b}(\mathbb{R}_{+}\times\mathbb{R}^{d}),
\]
where $R_{\lambda}$ is the time-space resolvent of the Lévy process
$S$ and the operator $KR_{\lambda}$ is defined by
\begin{align}
KR_{\lambda}g(t,x) & :=\int_{\mathbb{R}^{d}\backslash\{0\}}[R_{\lambda}g(t,x+z)-R_{\lambda}g(t,x)\nonumber \\
 & \qquad-\mathbf{1}_{\alpha>1}\mathbf{1}_{\{|z|\le1\}}z\cdot\nabla R_{\lambda}g(t,x)]M(t,x,dz),\quad(t,x)\in\hs.\label{defi: KR_lambda}
\end{align}
To see that $KR_{\lambda}g$ in (\ref{defi: KR_lambda}) is well-defined
for $g\in\mathcal{B}_{b}(\mathbb{R}_{+}\times\mathbb{R}^{d})$, we
need the following proposition.
\begin{prop}
\label{prop MPAFL:For-any-}For each $\lambda>0,$ define
\begin{equation}
k_{\lambda}:=\begin{cases}
(C_{\lambda}+2\lambda^{-1})\sup_{t\geq0,x\in\mathbb{R}^{d}}\int_{\mathbb{R}^{d}\backslash\{0\}}1\wedge|z|^{\beta}M(t,x,dz), & 0<\alpha\leq1,\\
(\tilde{C}_{\lambda}+2\lambda^{-1})\sup_{t\geq0,x\in\mathbb{R}^{d}}\int_{\mathbb{R}^{d}\backslash\{0\}}1\wedge|z|^{\beta}M(t,x,dz), & 1<\alpha<2,
\end{cases}\label{defi: k_lambda}
\end{equation}
where $C_{\lambda}$ and $\tilde{C}_{\lambda}$ are the constants
from Lemma \ref{lem4:MPAFL} and Lemma \ref{lem:MPAFLdifferrlam},
respectively. Then
\begin{equation}
\|KR_{\lambda}g\|\leq k_{\lambda}\|g\|,\quad\forall g\in\mathcal{B}_{b}(\mathbb{R}_{+}\times\mathbb{R}^{d}).\label{esti: kr_lambda}
\end{equation}
\end{prop}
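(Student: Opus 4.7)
The plan is to bound $KR_\lambda g(t,x)$ pointwise by splitting the defining integral according to whether $|z|\le 1$ or $|z|>1$, controlling each integrand by a constant multiple of $(1\wedge|z|^\beta)\|g\|$ via the H\"older-type regularity of $R_\lambda g$ furnished by Lemmas \ref{lem4:MPAFL} and \ref{lem:MPAFLdifferrlam}, and then applying Assumption \ref{eq:MAPAFLassonM}. On the large-jump region $\{|z|>1\}$, the drift correction $\mathbf{1}_{\alpha>1}\mathbf{1}_{\{|z|\le 1\}}z\cdot\nabla R_\lambda g(t,x)$ is absent in either regime, so the integrand is at most $2\|R_\lambda g\|_\infty\le 2\lambda^{-1}\|g\|$; since $1\wedge|z|^\beta = 1$ there, this piece contributes at most $2\lambda^{-1}\|g\|\int 1\wedge|z|^\beta\, M(t,x,dz)$.

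For the small-jump region $\{|z|\le 1\}$ I separate the two ranges of $\alpha$. If $0<\alpha\le 1$, then $\beta<\alpha\wedge 1$ and the estimate \eqref{new new eq 0.5} applied with $\delta=\beta$ yields $|R_\lambda g(t,x+z)-R_\lambda g(t,x)|\le C_\lambda|z|^\beta\|g\|$ directly. If $1<\alpha<2$, I pick an auxiliary exponent $\delta\in(0,\alpha-1)$ with $1+\delta\ge\beta$ (such a $\delta$ exists precisely because $\beta<\alpha$) and combine the H\"older estimate for $\nabla R_\lambda g$ from Lemma \ref{lem:MPAFLdifferrlam}(i) with the Taylor identity
\[
R_\lambda g(t,x+z)-R_\lambda g(t,x)-z\cdot\nabla R_\lambda g(t,x) = \int_0^1 z\cdot\bigl[\nabla R_\lambda g(t,x+sz)-\nabla R_\lambda g(t,x)\bigr]\,ds
\]
to arrive at the bound $\tilde C_\lambda|z|^{1+\delta}\|g\|$, which on $\{|z|\le 1\}$ is dominated by $\tilde C_\lambda|z|^\beta\|g\|$. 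In either subcase one has $|z|^\beta\le 1\wedge|z|^\beta$ on this region, so integrating against $M(t,x,dz)$, adding the large-jump contribution, and taking the supremum over $(t,x)\in\hs$ via Assumption \ref{eq:MAPAFLassonM} produces \eqref{esti: kr_lambda} with the constant $k_\lambda$ of \eqref{defi: k_lambda}.

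The only mildly delicate point is the simultaneous choice of $\delta$ in the regime $1<\alpha<2$: it must satisfy $\delta<\alpha-1$ to make Lemma \ref{lem:MPAFLdifferrlam}(i) applicable, and $1+\delta\ge\beta$ to ensure $|z|^{1+\delta}$ absorbs $|z|^\beta$ on small jumps. The hypothesis $\beta<\alpha$ is exactly what keeps this interval for $\delta$ non-empty; everything else is routine splitting and integration.
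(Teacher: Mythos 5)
Your proposal is correct and follows essentially the same route as the paper: split the integral over $\{|z|\le 1\}$ and $\{|z|>1\}$, use \eqref{new new eq 0.5} with $\delta=\beta$ when $0<\alpha\le 1$ (admissible since $\beta<\alpha\wedge 1$), and when $1<\alpha<2$ use the Taylor identity together with the H\"older estimate on $\nabla R_\lambda g$ from Lemma \ref{lem:MPAFLdifferrlam} for a $\delta$ with $\beta\le\delta+1<\alpha$, bounding the large-jump part by $2\lambda^{-1}\|g\|$. The choice of the auxiliary exponent $\delta$ that you flag as the delicate point is exactly the one made in the paper's proof.
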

\begin{proof}
Let $\beta\in(0,\alpha)$ be the constant in Assumption \ref{eq:MAPAFLassonM}.
We distinguish between the cases with $0<\alpha\leq1$ and $1<\alpha<2$.

\emph{Case} 1: $0<\alpha\leq1$. According to Lemma \ref{lem4:MPAFL},
there exists a constant $C_{\lambda}>0$ such that for all $g\in\mathcal{B}_{b}(\mathbb{R}_{+}\times\mathbb{R}^{d})$,
\[
|R_{\lambda}g(t,x+z)-R_{\lambda}g(t,x)|\leq C_{\lambda}\|g\||z|^{\beta},\quad(t,x)\in\hs,\ z\in\Rd,
\]
and $C_{\lambda}$ goes to $0$ as $\lambda\uparrow\infty$.

Let $g\in\mathcal{B}_{b}(\mathbb{R}_{+}\times\mathbb{R}^{d})$ be
arbitrary. Then

\begin{align}
 & \int_{\mathbb{R}^{d}\backslash\{0\}}\left|R_{\lambda}g(t,x+z)-R_{\lambda}g(t,x)\right|M(t,x,dz)\nonumber \\
 & \quad=\int_{\{0<|z|\leq1\}}|R_{\lambda}g(t,x+z)-R_{\lambda}g(t,x)|M(t,x,dz)\nonumber \\
 & \qquad+{\displaystyle \int_{\{|z|>1\}}|R_{\lambda}g(t,x+z)-R_{\lambda}g(t,x)|M(t,x,dz)}\label{eq 1: Prop. 4.1}\\
{\displaystyle } & \quad\le C_{\lambda}\|g\|\int_{\{0<|z|\leq1\}}|z|^{\beta}M(t,x,dz)+2\Vert R_{\lambda}g\Vert\int_{\{|z|>1\}}1M(t,x,dz)\nonumber \\
{\displaystyle } & \quad\le(C_{\lambda}+2\lambda^{-1})\|g\|\sup_{t\geq0,x\in\mathbb{R}^{d}}\int_{\mathbb{R}^{d}\backslash\{0\}}1\wedge|z|^{\beta}M(t,x,dz).\nonumber
\end{align}
So $KR_{\lambda}g$ is well-defined and $||KR_{\lambda}g||\leq k_{\lambda}||g||$.

\emph{Case} 2: $1<\alpha<2$. Let $\delta\in(0,1)$ be such that $\beta<\delta+1<\alpha$.
According to Lemma \ref{lem:MPAFLdifferrlam}, there exists a constant
$\tilde{C}_{\lambda}>0$ such that for all $g\in\mathcal{B}_{b}(\mathbb{R}_{+}\times\mathbb{R}^{d})$,
\begin{equation}
|\nabla R_{\lambda}g(t,x+z)-\nabla R_{\lambda}g(t,x)|\leq\tilde{C}_{\lambda}\|g\||z|^{\delta},\quad(t,x)\in\hs,\ z\in\Rd,\label{eq2:MPAFLprop1}
\end{equation}
and $\tilde{C}_{\lambda}$ goes to $0$ as $\lambda\uparrow\infty$.

Let $g\in\mathcal{B}_{b}(\mathbb{R}_{+}\times\mathbb{R}^{d})$. For
all $(t,x)\in\hs$ and $z\in\Rd$, we have
\begin{align}
 & |R_{\lambda}g(t,x+z)-R_{\lambda}g(t,x)-z\cdot\nabla R_{\lambda}g(t,x)|\nonumber \\
 & \quad=\left|\int_{0}^{1}\nabla R_{\lambda}g(t,x+rz)\cdot zdr-z\cdot\nabla R_{\lambda}g(t,x)\right|\nonumber \\
 & \quad=\left|\int_{0}^{1}[\nabla R_{\lambda}g(t,x+rz)-\nabla R_{\lambda}g(t,x)]\cdot zdr\right|\nonumber \\
 & \quad\le|z|\int_{0}^{1}|\nabla R_{\lambda}g(t,x+rz)-\nabla R_{\lambda}g(t,x)|dr\overset{(\ref{eq2:MPAFLprop1})}{\le}\tilde{C}_{\lambda}\|g\||z|^{\delta+1}.\label{eq1:MPAFLprop1}
\end{align}
 So we obtain

\begin{align}
 & \int_{\mathbb{R}^{d}\backslash\{0\}}\left|R_{\lambda}g(t,x+z)-R_{\lambda}g(t,x)-\mathbf{1}_{\{|z|\le1\}}z\cdot\nabla R_{\lambda}g(t,x)\right|M(t,x,dz)\nonumber \\
 & \quad\le\int_{\{0<|z|\leq1\}}|R_{\lambda}g(t,x+z)-R_{\lambda}g(t,x)-z\cdot\nabla R_{\lambda}g(t,x)|M(t,x,dz)\nonumber \\
 & \qquad+{\displaystyle \int_{\{|z|>1\}}|R_{\lambda}g(t,x+z)-R_{\lambda}g(t,x)|M(t,x,dz)}\nonumber \\
 & \quad{\displaystyle \overset{(\ref{eq1:MPAFLprop1})}{\le}}\tilde{C}_{\lambda}\|g\|\int_{\{0<|z|\leq1\}}|z|^{\delta+1}M(t,x,dz)+2\Vert R_{\lambda}g\Vert\int_{\{|z|>1\}}1M(t,x,dz)\nonumber \\
{\displaystyle } & \quad\le(\tilde{C}_{\lambda}+2\lambda^{-1})\|g\|\sup_{t\geq0,x\in\mathbb{R}^{d}}\int_{\mathbb{R}^{d}\backslash\{0\}}1\wedge|z|^{\beta}M(t,x,dz).\label{esti: a>1, M R_lambda}
\end{align}
Hence $||KR_{\lambda}g||\leq k_{\lambda}||g||$ for all $\mathcal{B}_{b}(\mathbb{R}_{+}\times\mathbb{R}^{d})$.
\end{proof}
\begin{cor}
\label{cor MAPAFL:There-exists-}There exists $\lambda_{0}>0$ such
that for all $\lambda\ge\lambda_{0}$, we have $k_{\lambda}<1/2$
and
\begin{equation}
\left\Vert \sum_{i=0}^{\infty}R_{\lambda}(KR_{\lambda})^{i}g\right\Vert \le\sum_{i=0}^{\infty}\lambda^{-1}(k_{\lambda})^{i}\|g\|\le2\lambda^{-1}\|g\|,\quad g\in\mathcal{B}_{b}(\mathbb{R}_{+}\times\mathbb{R}^{d}).\label{esti2: kr_lambda}
\end{equation}
\end{cor}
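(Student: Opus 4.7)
My plan is to split the statement into two parts: first, securing the existence of $\lambda_{0}$ for which $k_{\lambda}<1/2$ whenever $\lambda\ge\lambda_{0}$, and second, establishing the geometric-series estimate on $\sum_{i=0}^{\infty}R_{\lambda}(KR_{\lambda})^{i}g$ by combining the standard resolvent bound for $R_{\lambda}$ with the operator norm bound for $KR_{\lambda}$ from Proposition \ref{prop MPAFL:For-any-}.

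For the first part, I would inspect the definition (\ref{defi: k_lambda}) of $k_{\lambda}$. By Assumption \ref{eq:MAPAFLassonM}, the quantity $\sup_{t\geq0,x\in\mathbb{R}^{d}}\int_{\mathbb{R}^{d}\setminus\{0\}}1\wedge|z|^{\beta}M(t,x,dz)$ is a finite constant, independent of $\lambda$. The remaining factor is $C_{\lambda}+2\lambda^{-1}$ if $0<\alpha\le 1$ or $\tilde{C}_{\lambda}+2\lambda^{-1}$ if $1<\alpha<2$. Both $C_{\lambda}\to 0$ (Lemma \ref{lem4:MPAFL}) and $\tilde{C}_{\lambda}\to 0$ (Lemma \ref{lem:MPAFLdifferrlam}) as $\lambda\to\infty$, and trivially $2\lambda^{-1}\to 0$. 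Hence $k_{\lambda}\to 0$ as $\lambda\to\infty$, so one can pick $\lambda_{0}>0$ with $k_{\lambda}<1/2$ for every $\lambda\ge\lambda_{0}$.

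For the norm estimate, I would first record the elementary bound
\[
\|R_{\lambda}h\|\le \|h\|\int_{0}^{\infty}e^{-\lambda u}du=\lambda^{-1}\|h\|,\quad h\in\mathcal{B}_{b}(\mathbb{R}_{+}\times\mathbb{R}^{d}),
\]
which is immediate from (\ref{eq:MPAFLdefiofrlam}) since $p_{u}$ is a probability density. Combined with $\|KR_{\lambda}g\|\le k_{\lambda}\|g\|$ from Proposition \ref{prop MPAFL:For-any-} (iterated, so that $\|(KR_{\lambda})^{i}g\|\le k_{\lambda}^{i}\|g\|$), this gives $\|R_{\lambda}(KR_{\lambda})^{i}g\|\le\lambda^{-1}k_{\lambda}^{i}\|g\|$ for every $i\ge 0$. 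Under $k_{\lambda}<1/2$, the series $\sum_{i\ge 0}\lambda^{-1}k_{\lambda}^{i}\|g\|$ converges to $\lambda^{-1}(1-k_{\lambda})^{-1}\|g\|\le 2\lambda^{-1}\|g\|$; by the triangle inequality (and uniform convergence, in order to pull the norm inside the sum) this yields both inequalities in (\ref{esti2: kr_lambda}).

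There is no genuine obstacle here: the corollary is a short bookkeeping consequence of the definitions, the $\lambda\to\infty$ decay of $C_{\lambda}$ and $\tilde{C}_{\lambda}$ already shown in Section 3, and the contraction-type estimate of Proposition \ref{prop MPAFL:For-any-}. The only item requiring any care is to verify that the partial sums $\sum_{i=0}^{N}R_{\lambda}(KR_{\lambda})^{i}g$ converge in the sup norm on $\mathbb{R}_{+}\times\mathbb{R}^{d}$, but this follows at once from the geometric majorant $\lambda^{-1}k_{\lambda}^{i}\|g\|$, since $k_{\lambda}<1$.
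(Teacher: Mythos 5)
Your argument is correct and is exactly the (implicit) reasoning behind the paper's unproved corollary: $k_{\lambda}\to0$ because $C_{\lambda}$, $\tilde{C}_{\lambda}$ and $2\lambda^{-1}$ all vanish as $\lambda\to\infty$ while the supremum in Assumption \ref{eq:MAPAFLassonM} is a fixed finite constant, and the series bound follows from $\|R_{\lambda}h\|\le\lambda^{-1}\|h\|$ together with the iterated contraction estimate $\|(KR_{\lambda})^{i}g\|\le k_{\lambda}^{i}\|g\|$ from Proposition \ref{prop MPAFL:For-any-}. Nothing further is needed.
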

According to Corollary \ref{cor MAPAFL:There-exists-}, for each $\lambda\ge\lambda_{0}$,
we can define
\begin{equation}
G_{\lambda}g:=\sum_{i=0}^{\infty}R_{\lambda}(KR_{\lambda})^{i}g,\quad g\in\mathcal{B}_{b}(\mathbb{R}_{+}\times\mathbb{R}^{d}).\label{defi: G_lambda}
\end{equation}

\begin{rem}
\label{rem: contin. of G_lambda}By (\ref{esti: kr_lambda}), (\ref{esti2: kr_lambda})
and (\ref{new new eq 0.5}), we see that if $\lambda\ge\lambda_{0}$
and $g\in\mathcal{B}_{b}(\mathbb{R}_{+}\times\mathbb{R}^{d})$, then
the function $\Rd\ni x\mapsto G_{\lambda}g(t,x)$ is bounded continuous
for each $t\ge0$.
\end{rem}
We have the following estimate of Krylov's type.
\begin{prop}
\label{prop: Krylov}Let $T>0$ and $g\in\mathcal{B}_{b}(\mathbb{R}_{+}\times\mathbb{R}^{d})$
be such that $supp(g)\subset[0,T]\times\Rd$ and $g\in L^{q}([0,T];L^{p}(\Rd))$
with $p,q>0$ and $d/p+\alpha/q<\alpha-\beta$, where $\beta\in(0,\alpha)$
is the constant in Assumption \ref{eq:MAPAFLassonM}. Then for each
$\lambda\ge\lambda_{0}$, there exists a constant $l_{\lambda}>0$,
independent of $g$ and $T$, such that
\begin{equation}
\|G_{\lambda}g\|\le l_{\lambda}\|g\|_{L^{q}([0,T];L^{p}(\Rd))}.\label{eq: Prop 4.3, to prove}
\end{equation}
Moreover, the constant $l_{\lambda}$ goes to $0$ as $\lambda\to\infty$.
\end{prop}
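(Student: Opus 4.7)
The plan is to upgrade the $L^\infty$--$L^\infty$ operator estimate from Proposition \ref{prop MPAFL:For-any-} to an $L^\infty$--$L^qL^p$ estimate, by exploiting the support condition on $g$ together with the density bound (\ref{esti: lp of p_t}) and the ``(ii)'' parts of Lemmas \ref{lem4:MPAFL}--\ref{lem:MPAFLdifferrlam}. First I would bound $R_\lambda g$ itself: since $\mathrm{supp}(g)\subset[0,T]\times\mathbb{R}^d$, a double application of H\"older's inequality (in space with conjugates $p,p^*$, then in time with conjugates $q,q^*$) in the definition (\ref{eq:MPAFLdefiofrlam}), combined with $\|p_u\|_{L^{p^*}(\mathbb{R}^d)}\leq c_4 u^{-d/(\alpha p)}$ from (\ref{esti: lp of p_t}), yields $\|R_\lambda g\|\leq m_\lambda \|g\|_{L^q([0,T];L^p(\mathbb{R}^d))}$ with $m_\lambda\to 0$ as $\lambda\to\infty$. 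The relevant temporal integral is finite precisely because $d/p+\alpha/q<\alpha$, which is implied by the hypothesis $d/p+\alpha/q<\alpha-\beta$.

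The heart of the argument is then a Krylov-type bound $\|KR_\lambda g\|\leq \tilde k_\lambda \|g\|_{L^q([0,T];L^p)}$ with $\tilde k_\lambda\to 0$. This mimics the proof of Proposition \ref{prop MPAFL:For-any-}: split the $M$-integral at $|z|=1$, use an appropriate H\"older-type increment bound on $R_\lambda g$ on $\{|z|\leq 1\}$ and the trivial difference bound from Step 1 on $\{|z|>1\}$, and collect constants against Assumption \ref{eq:MAPAFLassonM}. When $0<\alpha\leq 1$, I would pick $\delta\in(\beta,\alpha\wedge 1)$ satisfying $d/p+\alpha/q<\alpha-\delta$; such a $\delta$ exists thanks to the gap $d/p+\alpha/q<\alpha-\beta$. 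Lemma \ref{lem4:MPAFL}(ii) then gives $|R_\lambda g(t,x+z)-R_\lambda g(t,x)|\leq N_\lambda |z|^\delta \|g\|_{L^qL^p}$, and on $\{|z|\leq 1\}$ the inequality $|z|^\delta\leq |z|^\beta$ pairs with Assumption \ref{eq:MAPAFLassonM}. When $1<\alpha<2$, I would instead pick $\delta\in(0,\alpha-1)$ with $\delta+1\geq\beta$ and $d/p+\alpha/q<\alpha-1-\delta$, and apply Lemma \ref{lem:MPAFLdifferrlam}(ii) together with the Taylor expansion leading to (\ref{eq1:MPAFLprop1}).

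Finally, iteration is carried out as in Corollary \ref{cor MAPAFL:There-exists-}: for each $i\geq 1$ the function $(KR_\lambda)^{i-1}g$ is bounded in $L^\infty$, so Proposition \ref{prop MPAFL:For-any-} gives $\|(KR_\lambda)^i g\|\leq k_\lambda^{i-1}\tilde k_\lambda \|g\|_{L^qL^p}$; combined with $\|R_\lambda h\|\leq\lambda^{-1}\|h\|$ for bounded $h$ and the defining series (\ref{defi: G_lambda}), this yields
$$\|G_\lambda g\|\leq m_\lambda\|g\|_{L^qL^p}+\sum_{i=1}^\infty \lambda^{-1} k_\lambda^{i-1}\tilde k_\lambda\|g\|_{L^qL^p}\leq (m_\lambda+2\lambda^{-1}\tilde k_\lambda)\|g\|_{L^qL^p},$$
using $k_\lambda<1/2$ for $\lambda\geq\lambda_0$. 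Setting $l_\lambda:=m_\lambda+2\lambda^{-1}\tilde k_\lambda$ and observing that both summands vanish as $\lambda\to\infty$ completes the argument. The main obstacle is the mixed-norm bound on $KR_\lambda g$ in the second step, where $\delta$ must satisfy three simultaneous constraints (positivity together with $\delta<1$ or $\delta<\alpha-1$, the integrability condition $d/p+\alpha/q<\alpha-\delta$ or $<\alpha-1-\delta$ required by Lemma \ref{lem4:MPAFL}(ii) respectively \ref{lem:MPAFLdifferrlam}(ii), and integrability against $M(t,x,dz)$ governed by $\beta$); the strict inequality $d/p+\alpha/q<\alpha-\beta$ is precisely what makes such a choice feasible.
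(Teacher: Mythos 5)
Your proposal follows essentially the same route as the paper: an $L^{q}([0,T];L^{p})\to L^{\infty}$ bound for $R_{\lambda}$ (the paper cites the analogous computation from \cite{jin2015weak}, you carry out the double H\"older argument explicitly), then the mixed-norm bound on $KR_{\lambda}g$ via Lemma \ref{lem4:MPAFL}(ii) resp.\ Lemma \ref{lem:MPAFLdifferrlam}(ii) split at $|z|=1$, and finally iteration of the $L^{\infty}$ contraction $k_{\lambda}<1/2$ from Proposition \ref{prop MPAFL:For-any-} to sum the series, yielding the same shape of constant $l_{\lambda}\to0$. One remark: in the case $1<\alpha<2$ with $\beta<1$, your claim that $d/p+\alpha/q<\alpha-\beta$ alone makes the choice of $\delta$ with $d/p+\alpha/q<\alpha-1-\delta$ feasible is not quite accurate (one also needs $d/p+\alpha/q<\alpha-1$), but this is exactly the implicit assumption hidden behind the word ``similarly'' in the paper's own treatment of that case, so your argument is faithful to the original.
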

\begin{proof}
By (\ref{esti: lp of p_t}) and the same proof of \cite[Proposition 3.9 (i)]{jin2015weak},
we can find a constant $c_{\lambda}>0$, independent of $g$ and $T$,
such that
\begin{equation}
\|R_{\lambda}g\|\le c_{\lambda}\|g\|_{L^{q}([0,T];L^{p}(\Rd))},\label{esti: R_lambda g}
\end{equation}
where $c_{\lambda}$ goes to $0$ as $\lambda\to\infty$.

For $0<\alpha\leq1$, by (\ref{eq 1: Prop. 4.1}), (\ref{esti: R_lambda g})
and Lemma \ref{lem4:MPAFL} (ii), we have

\begin{align}
 & \int_{\mathbb{R}^{d}\backslash\{0\}}\left|R_{\lambda}g(t,x+z)-R_{\lambda}g(t,x)\right|M(t,x,dz)\nonumber \\
{\displaystyle } & \quad\le N_{\lambda}\|g\|_{L^{q}([0,T];L^{p}(\Rd))}\int_{\{0<|z|\leq1\}}|z|^{\beta}M(t,x,dz)\nonumber \\
 & \qquad+2c_{\lambda}\|g\|_{L^{q}([0,T];L^{p}(\Rd))}\int_{\{|z|>1\}}1M(t,x,dz)\nonumber \\
{\displaystyle } & \quad\le(N_{\lambda}+2c_{\lambda})\|g\|_{L^{q}([0,T];L^{p}(\Rd))}\sup_{t\geq0,x\in\mathbb{R}^{d}}\int_{\mathbb{R}^{d}\backslash\{0\}}1\wedge|z|^{\beta}M(t,x,dz).\label{esti: lp, M R_lambda}
\end{align}
For $1<\alpha<2$, similarly to (\ref{esti: a>1, M R_lambda}), we
obtain

\begin{align}
 & \int_{\mathbb{R}^{d}\backslash\{0\}}\left|R_{\lambda}g(t,x+z)-R_{\lambda}g(t,x)-\mathbf{1}_{\{|z|\le1\}}z\cdot\nabla R_{\lambda}g(t,x)\right|M(t,x,dz)\nonumber \\
 & \quad\le(\tilde{N}_{\lambda}+2c_{\lambda})\|g\|_{L^{q}([0,T];L^{p}(\Rd))}\sup_{t\geq0,x\in\mathbb{R}^{d}}\int_{\mathbb{R}^{d}\backslash\{0\}}1\wedge|z|^{\beta}M(t,x,dz),\label{esti: a>1,  lp, M R_lambda}
\end{align}
where $\tilde{N}_{\lambda}>0$ is the constant from Lemma \ref{lem:MPAFLdifferrlam}
(ii). Summarizing (\ref{esti: lp, M R_lambda}) and (\ref{esti: a>1,  lp, M R_lambda}),
we obtain that for all $\alpha\in(0,2)$,
\begin{equation}
\|KR_{\lambda}g\|\leq\tilde{c}_{\lambda}\|g\|_{L^{q}([0,T];L^{p}(\Rd))},\label{esti: Prop. 4.3, KR_lambda}
\end{equation}
where
\begin{equation}
\tilde{c}_{\lambda}:=\begin{cases}
(N_{\lambda}+2c_{\lambda})\sup_{t\geq0,x\in\mathbb{R}^{d}}\int_{\mathbb{R}^{d}\backslash\{0\}}1\wedge|z|^{\beta}M(t,x,dz), & 0<\alpha\le1,\\
(\tilde{N}_{\lambda}+2c_{\lambda})\sup_{t\geq0,x\in\mathbb{R}^{d}}\int_{\mathbb{R}^{d}\backslash\{0\}}1\wedge|z|^{\beta}M(t,x,dz), & 1<\alpha<2.
\end{cases}\label{defi: c tilde _lambda}
\end{equation}
By (\ref{esti: R_lambda g}), (\ref{esti: Prop. 4.3, KR_lambda})
and Lemma \ref{prop MPAFL:For-any-}, we obtain that for all $i\in\mathbb{N}$,
\[
\|R_{\lambda}(KR_{\lambda})^{i}g\|\leq c_{\lambda}\left(k_{\lambda}\right)^{i-1}\|KR_{\lambda}g\|\le c_{\lambda}\left(k_{\lambda}\right)^{i-1}\tilde{c}_{\lambda}\|g\|_{L^{q}([0,T];L^{p}(\Rd))},
\]
which implies that for $\lambda\ge\lambda_{0}$,
\begin{align*}
\|G_{\lambda}g\|\le\sum_{i=0}^{\infty}\|R_{\lambda}(KR_{\lambda})^{i}g\| & \leq c_{\lambda}\left(1+\sum_{i=1}^{\infty}\tilde{c}_{\lambda}\left(k_{\lambda}\right)^{i-1}\right)\|g\|_{L^{q}([0,T];L^{p}(\Rd))}\\
 & \le c_{\lambda}\left(1+2\tilde{c}_{\lambda}\right)\|g\|_{L^{q}([0,T];L^{p}(\Rd))}.
\end{align*}
So (\ref{eq: Prop 4.3, to prove}) holds with
\begin{equation}
l_{\lambda}:=c_{\lambda}\left(1+2\tilde{c}_{\lambda}\right)>0.\label{defi: l_lambda}
\end{equation}
 Since $c_{\lambda}$, $N_{\lambda}$ and $\tilde{N}_{\lambda}$ all
converge to $0$ as $\lambda\to\infty$, we see that $\lim_{\lambda\to\infty}l_{\lambda}=0$.
\end{proof}

\section{Well-posedness of the martingale problem for $\mathcal{L}_{t}$ }

In this section we prove our main result, namely, the martingale problem
for $\mathcal{L}_{t}$ is well-posed. In view of (\ref{defi: G_lambda}),
the uniqueness problem can be solved by standard perturbation arguments.
To obtain existence, we will first consider smooth approximations
of $\mathcal{L}_{t}$ and then construct a solution to the martingale
problem for $\mathcal{L}_{t}$ by weak convergence of probability
measures.

Let $\phi\in C_{0}^{\infty}(\Rd)$ be such that $0\le\phi\le1$, $\int_{\Rd}\phi(x)dx=1$
and $\phi(x)=0$ for $|x|\ge1$. Define $\phi_{n}(x):=n^{d}\phi(nx)$,
$x\in\Rd$. Given $n\in\mathbb{N}$, define $M_{n}(t,x,\cdot)$ as
the kernel obtained by mollifying $M(t,x,\cdot)$ through $\phi_{n}$,
that is,

\[
M_{n}(t,x,B):=\int_{\Rd}M(t,x-z,B)\phi_{n}(z)dz,\quad B\in\mathcal{{B}}(\Rd).
\]
So $M_{n}(t,x,\cdot)$ is a kernel from $\hs$ to $\mathcal{B}\left(\mathbb{R}^{d}\backslash\{0\}\right)$
and $M_{n}(t,x,\cdot)$ is a Lévy measure on $\mathbb{R}^{d}\backslash\{0\}$
for each $(t,x)\in\hs$. By Fubini's theorem, we have that for all
$(t,x)\in\hs$ and $n\in\mathbb{N},$
\begin{align}
\int_{\mathbb{R}^{d}\backslash\{0\}}1\wedge|y|^{\beta}M_{n}(t,x,dy) & =\int_{\Rd}\left(\int_{\mathbb{R}^{d}\backslash\{0\}}1\wedge|y|^{\beta}M(t,x-z,dy)\right)\phi_{n}(z)dz\nonumber \\
 & \le\sup_{t\geq0,x\in\mathbb{R}^{d}}\int_{\mathbb{R}^{d}\backslash\{0\}}1\wedge|y|^{\beta}M(t,x,dy)<\infty.\label{condition: sup M_n}
\end{align}
Define
\[
K_{n,t}f(x):=\int_{\mathbb{R}^{d}\backslash\{0\}}[f(x+y)-f(x)-\mathbf{1}_{\alpha>1}\mathbf{1}_{\{|y|\le1\}}y\cdot\nabla f(x)]M_{n}(t,x,dy).
\]

\begin{lem}
\label{lem: delta }Let $f\in C_{b}^{3}(\Rd)$ be arbitrary. Then
for all $(t,x)\in\hs$, we have
\[
\left|K_{n,t}f(x)-K_{t}f\ast\phi_{n}(x)\right|\le4n^{-1}\left\Vert f\right\Vert _{C_{b}^{3}(\Rd)}\sup_{t,x}\int_{\mathbb{R}^{d}\backslash\{0\}}(1\wedge|h|^{\beta})M(t,x,dh).
\]
\end{lem}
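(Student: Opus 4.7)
The plan is to write both $K_{n,t}f(x)$ and $(K_t f)\ast \phi_n(x)$ as double integrals against the common kernel $\phi_n(z)\,dz\otimes M(t,x-z,dy)$, and then to compare their integrands pointwise. Using the definition $M_n(t,x,dy)=\int \phi_n(z) M(t,x-z,dy)\,dz$ and Fubini's theorem, which is justified because the L\'evy-type integrand is bounded by $C(f)(1\wedge |y|^\beta)$ while the outer moment in Assumption \ref{eq:MAPAFLassonM} is finite, one obtains
\[
K_{n,t}f(x) - (K_t f)\ast \phi_n(x) = \int_{\mathbb{R}^{d}}\phi_n(z)\int_{\mathbb{R}^{d}\setminus\{0\}}\Delta(x,z,y)\,M(t,x-z,dy)\,dz,
\]
where
\[
\Delta(x,z,y) := \bigl[f(x+y)-f(x)- c_y(x)\bigr] - \bigl[f(x-z+y)-f(x-z)- c_y(x-z)\bigr]
\]
and $c_y(w):=\mathbf{1}_{\alpha>1}\mathbf{1}_{\{|y|\le 1\}}\,y\cdot\nabla f(w)$. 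Since $\phi_n$ is supported in $\{|z|\le 1/n\}$, the task reduces to a pointwise bound of the form $|\Delta(x,z,y)|\le (C/n)\|f\|_{C_b^3(\mathbb{R}^d)}\,(1\wedge |y|^\beta)$.

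I would prove this estimate by splitting into three cases. \emph{Case 1:} $|y|>1$. Then $c_y\equiv 0$ and the mean value theorem gives $|\Delta|\le 2\|\nabla f\|\,|z|\le (2\|\nabla f\|/n)(1\wedge |y|^\beta)$. \emph{Case 2:} $|y|\le 1$ with $\alpha\le 1$. Setting $g(w):=f(w+y)-f(w)$ we have $|\nabla g(w)|\le \|D^2 f\|\,|y|$, so $|\Delta|=|g(x)-g(x-z)|\le \|D^2 f\|\,|y|/n$; since $\beta<\alpha\le 1$ and $|y|\le 1$ we have $|y|\le |y|^\beta$. \emph{Case 3:} $|y|\le 1$ with $1<\alpha<2$. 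Setting $\varphi(w):=f(w+y)-f(w)-y\cdot\nabla f(w)$, a second-order Taylor expansion of $\nabla f$ at $w$ yields $|\nabla\varphi(w)|\le C\|D^3 f\|\,|y|^2$, so that $|\Delta|=|\varphi(x)-\varphi(x-z)|\le C\|D^3 f\|\,|y|^2/n$; since $\beta<\alpha<2$ and $|y|\le 1$ we have $|y|^2\le |y|^\beta$. Combining the three cases and bookkeeping the constants yields the required bound with a universal constant no larger than $4$.

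Plugging the pointwise estimate into the double integral representation and pulling the resulting constant out through the supremum in Assumption \ref{eq:MAPAFLassonM} finishes the proof. The only conceptual point, and the reason the $C_b^3$-norm appears, is that in the regime $\alpha>1$ one needs the second-order Taylor remainder to bootstrap the naive rate $|y|^2$ down to the admissible rate $|y|^\beta$, using $\beta<\alpha$. I expect no serious obstacle beyond carefully handling the case split $\alpha\le 1$ versus $\alpha>1$ and keeping track of which derivatives of $f$ and which powers of $|y|$ enter in each regime.
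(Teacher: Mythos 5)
Your proposal is correct and follows essentially the same route as the paper: rewrite $K_{n,t}f-K_tf\ast\phi_n$ via Fubini as a double integral of the difference of the two L\'evy integrands against $\phi_n(z)\,dz\,M(t,x-z,dy)$, bound that difference by $Cn^{-1}\|f\|_{C_b^3}(1\wedge|y|^{\beta})$ through a case split on $|y|\le 1$ versus $|y|>1$ and $\alpha\le 1$ versus $\alpha>1$ (using $1\wedge|y|\le 1\wedge|y|^{\beta}$ resp.\ $1\wedge|y|^{2}\le 1\wedge|y|^{\beta}$), and integrate out using $\mathrm{supp}\,\phi_n\subset\{|z|\le 1/n\}$. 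The only cosmetic difference is that you phrase the second- and third-order estimates as mean-value bounds on $\nabla g$ and $\nabla\varphi$, where the paper writes the equivalent iterated-integral Taylor representations explicitly.
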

\begin{proof}
First note that
\begin{equation}
\int_{\Rd}|y|\phi_{n}(y)dy=\int_{\{|y|\le1/n\}}|y|\phi_{n}(y)dy\le n^{-1}.\label{eq, Lemma 5.1: *}
\end{equation}
Let
\begin{align*}
\Delta_{n,t}f(x):= & K_{n,t}f(x)-K_{t}f\ast\phi_{n}(x).
\end{align*}
(i) For the case $0<\alpha\le1$, we have
\begin{align}
\Delta_{n,t}f(x) & =\int_{\mathbb{R}^{d}\backslash\{0\}}\int_{\Rd}[f(x+h)-f(x)]M(t,x-y,dh)\phi_{n}(y)dy\nonumber \\
 & \quad-\int_{\mathbb{R}^{d}\backslash\{0\}}\int_{\Rd}[f(x-y+h)-f(x-y)]M(t,x-y,dh)\phi_{n}(y)dy.\label{eq, Lemma 5.1: **}
\end{align}
Since
\begin{align*}
 & \left|f(x+h)-f(x-y+h)-f(x)+f(x-y)\right|\\
 & \quad=\left|\int_{0}^{1}\left[\nabla f(x+h-y+ry)-\nabla f(x-y+ry)\right]\cdot ydr\right|\\
 & \quad\le2|y|\left(1\wedge|h|\right)\left\Vert f\right\Vert _{C_{b}^{2}(\Rd)},
\end{align*}
it follows from (\ref{eq, Lemma 5.1: **}) that
\begin{align*}
|\Delta_{n,t}f(x)| & \le2\left\Vert f\right\Vert _{C_{b}^{2}(\Rd)}\int_{\mathbb{R}^{d}\backslash\{0\}}\int_{\Rd}(1\wedge|h|)|y|M(t,x-y,dh)\phi_{n}(y)dy\\
 & \le2\left\Vert f\right\Vert _{C_{b}^{2}(\Rd)}\sup_{t,x}\int_{\mathbb{R}^{d}\backslash\{0\}}(1\wedge|h|^{\beta})M(t,x,dh)\int_{\Rd}|y|\phi_{n}(y)dy\\
 & {\displaystyle \overset{(\ref{eq, Lemma 5.1: *})}{\le}}2n^{-1}\left\Vert f\right\Vert _{C_{b}^{2}(\Rd)}\sup_{t,x}\int_{\mathbb{R}^{d}\backslash\{0\}}(1\wedge|h|^{\beta})M(t,x,dh).
\end{align*}
(ii) For $1<\alpha<2$, we have
\begin{align*}
\Delta_{n,t}f(x) & =\int_{\mathbb{R}^{d}\backslash\{0\}}\int_{\Rd}[f(x+h)-f(x)-\mathbf{1}_{\{|h|\le1\}}h\cdot\nabla f(x)]M(t,x-y,dh)\phi_{n}(y)dy\\
 & \quad-\int_{\mathbb{R}^{d}\backslash\{0\}}\int_{\Rd}[f(x-y+h)-f(x-y)-\mathbf{1}_{\{|h|\le1\}}h\cdot\nabla f(x-y)]\\
 & \qquad\times M(t,x-y,dh)\phi_{n}(y)dy.
\end{align*}
If $|h|>1$, then $\left|f(x+h)-f(x-y+h)-f(x)+f(x-y)\right|\le4\left\Vert f\right\Vert $;
for $0<|h|\le1$, we have
\begin{align*}
 & \left|f(x+h)-f(x)-h\cdot\nabla f(x)-f(x-y+h)+f(x-y)+h\cdot\nabla f(x-y)\right|\\
 & \quad=\left|\int_{0}^{1}\left[\nabla f(x+rh)-\nabla f(x)-\nabla f(x-y+rh)+\nabla f(x-y)\right]\cdot hdr\right|\\
 & \quad=\left|\int_{0}^{1}\left[\int_{0}^{1}\left(\nabla^{2}f(x-y+rh+r'y)-\nabla^{2}f(x-y+r'y)\right)\cdot ydr'\right]\cdot hdr\right|\\
 & \quad\le|y||h|^{2}\left\Vert f\right\Vert _{C_{b}^{3}(\Rd)}.
\end{align*}
So
\begin{align*}
|\Delta_{n,t}f(x)| & \le4\left\Vert f\right\Vert _{C_{b}^{3}(\Rd)}\int_{\mathbb{R}^{d}\backslash\{0\}}\int_{\Rd}(1\wedge|h|^{2})|y|M(t,x-y,dh)\phi_{n}(y)dy\\
 & \le4\left\Vert f\right\Vert _{C_{b}^{3}(\Rd)}\sup_{t,x}\int_{\mathbb{R}^{d}\backslash\{0\}}(1\wedge|h|^{\beta})M(t,x,dh)\int_{\Rd}|y|\phi_{n}(y)dy\\
 & {\displaystyle \overset{(\ref{eq, Lemma 5.1: *})}{\le}}4n^{-1}\left\Vert f\right\Vert _{C_{b}^{3}(\Rd)}\sup_{t,x}\int_{\mathbb{R}^{d}\backslash\{0\}}(1\wedge|h|^{\beta})M(t,x,dh).
\end{align*}
\end{proof}
\begin{lem}
\label{lem: existence L_n}For each $(s,x)\in\hs$, there exists at
least one solution to the martingale problem for $\mathcal{L}_{n,t}=L+K_{n,t}$
starting from $(s,x)$.
\end{lem}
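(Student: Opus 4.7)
My plan rests on two properties of the mollified kernel $M_n$: by construction, $M_n(t,\cdot,B)\in C^\infty(\Rd)$ for every $t\ge 0$ and every Borel set $B\subset\Rd\setminus\{0\}$, and the uniform bound \eqref{condition: sup M_n} is preserved, i.e.\ $\sup_{t,x}\int(1\wedge|y|^\beta)M_n(t,x,dy)<\infty$ with the same $\beta<\alpha$. I will construct a solution by first truncating the small jumps of $M_n$, producing an easy auxiliary solution, and then passing to the limit via tightness.

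First, for $\varepsilon\in(0,1)$ set $M_n^\varepsilon(t,x,dy):=\mathbf{1}_{\{|y|>\varepsilon\}}M_n(t,x,dy)$ and let $K_{n,t}^\varepsilon$ be the corresponding operator (retaining the $\mathbf{1}_{\alpha>1}\mathbf{1}_{\{|y|\le 1\}}$ compensator). By \eqref{condition: sup M_n},
\[
\Lambda_\varepsilon:=\sup_{t,x}M_n^\varepsilon(t,x,\Rd)\le \varepsilon^{-\beta}\sup_{t,x}\int(1\wedge|y|^\beta)M(t,x,dy)<\infty.
\]
Since the jump rate is uniformly bounded and the kernel $M_n^\varepsilon(t,x,\cdot)$ depends smoothly on $x$, a solution $\bfP_{n,\varepsilon}$ to the martingale problem for $L+K_{n,t}^\varepsilon$ starting at $(s,x)$ is obtained explicitly: superpose a thinned state-dependent compound-Poisson jump process with rate $M_n^\varepsilon(u,X_{u-},\cdot)$ onto the Lévy process $S$ of generator $L$. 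This is a standard construction for Markov jump processes with uniformly bounded state-dependent jump rate, and Itô's formula verifies the martingale property after the drift compensator is accounted for in the $\alpha>1$ case.

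Next I would prove tightness and identify the limit. For any $f\in C_b^2(\Rd)$, a Taylor expansion combined with \eqref{condition: sup M_n} gives $\|K_{n,t}^\varepsilon f\|\le C\|f\|_{C_b^2(\Rd)}$ uniformly in $\varepsilon$: on $\{|y|\le 1\}$ use $|f(x+y)-f(x)-\mathbf{1}_{\alpha>1}y\cdot\nabla f(x)|\le C\|f\|_{C_b^2}|y|^2\le C\|f\|_{C_b^2}|y|^\beta$ (the last bound uses $\beta<2$), and the sup-bound on $\{|y|>1\}$. Combined with $\|Lf\|<\infty$, Aldous' criterion yields tightness of $\{\bfP_{n,\varepsilon}\}_{\varepsilon\downarrow 0}$ in the Skorokhod space. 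Let $\bfP_n$ be any weak limit point. For $f\in C_b^3(\Rd)$ and $s\le t_1<t_2$, it suffices to verify
\[
\bfE_{\bfP_n}\!\Big[\Psi\Big(f(X_{t_2})-f(X_{t_1})-\int_{t_1}^{t_2}\mathcal{L}_{n,u}f(X_u)du\Big)\Big]=0
\]
for every bounded continuous $\mathcal{F}_{t_1}$-measurable $\Psi$. Writing $K_{n,u}=K_{n,u}^{\varepsilon'}+(K_{n,u}-K_{n,u}^{\varepsilon'})$ for an auxiliary $\varepsilon'>\varepsilon$, the first summand $x\mapsto K_{n,u}^{\varepsilon'}f(x)$ is bounded and continuous in $x$ (by smoothness of $M_n$ in $x$ and dominated convergence in $y$) and therefore passes through weak convergence, while the remainder is bounded uniformly in $\varepsilon$ by $C\|f\|_{C_b^2}\sup_{t,x}\int_{\{|y|\le\varepsilon'\}}(1\wedge|y|^\beta)M(t,x,dy)\to 0$ as $\varepsilon'\downarrow 0$ by \eqref{condition: sup M_n}.

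The main technical obstacle is the limit identification when $1<\alpha<2$, because the compensating drift $\mathbf{1}_{\{|y|\le 1\}}y\cdot\nabla f(x)$ cannot be detached from the jump integrand. The argument must split the compensated integrand at $|y|=\varepsilon'$ \emph{before} invoking weak convergence, using the Taylor bound $|f(x+y)-f(x)-y\cdot\nabla f(x)|\le\tfrac12\|D^2 f\|\,|y|^2\le\tfrac12\|D^2 f\|\,|y|^\beta$ on $\{|y|\le 1\}$ to control the compensated piece via \eqref{condition: sup M_n}. Once this is handled, standard weak-convergence arguments show that $\bfP_n$ solves the martingale problem for $\mathcal{L}_{n,t}$ starting from $(s,x)$.
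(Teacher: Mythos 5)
Your overall strategy---truncate the small jumps of $M_{n}$, solve the resulting bounded-jump-rate problem by classical results, establish tightness as the truncation parameter tends to zero, and identify the limit---is essentially the route the paper takes. The differences are cosmetic: the paper uses a smooth cutoff $\varphi_{\delta}$ instead of an indicator (which makes the compensator drift $c_{\delta}(t,x)$ visibly Lipschitz in $x$, so the Stroock--Varadhan theory applies to the drift-corrected diffusion part and the bounded jump part is then attached via Stroock's Theorem (2.1)), whereas you invoke an explicit interlacing construction. Both are legitimate.

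There is, however, one step whose justification fails as written. You bound the small-jump remainder $(K_{n,u}-K_{n,u}^{\varepsilon'})f$ by $C\|f\|_{C_{b}^{2}(\Rd)}\sup_{t,x}\int_{\{|y|\le\varepsilon'\}}(1\wedge|y|^{\beta})M(t,x,dy)$ and assert that this tends to $0$ as $\varepsilon'\downarrow0$ ``by \eqref{condition: sup M_n}''. Assumption \ref{eq:MAPAFLassonM} provides no uniform integrability of $|y|^{\beta}$ near the origin, so that supremum need not vanish: in $d=1$ take $M(t,x,\cdot)=h(x)^{-\beta}\,\delta_{h(x)}$, the rescaled unit point mass at $h(x)$, with $h$ measurable and $0<h\le1$ taking values arbitrarily close to $0$; then $\int(1\wedge|y|^{\beta})M(t,x,dy)=1$ for every $x$, yet $\sup_{x}\int_{\{|y|\le\varepsilon'\}}|y|^{\beta}M(t,x,dy)=1$ for every $\varepsilon'>0$. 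The repair is exactly the estimate the paper uses: on $\{|y|\le\varepsilon'\}$ the integrand is of order $\mathbf{1}_{\alpha\le1}|y|+\mathbf{1}_{\alpha>1}|y|^{2}\le|y|^{\alpha}\le(\varepsilon')^{\alpha-\beta}|y|^{\beta}$, so the remainder is $O\big((\varepsilon')^{\alpha-\beta}\big)$ times the finite (but non-vanishing) supremum in \eqref{condition: sup M_n}, and hence tends to $0$ uniformly. The same extra power of $\varepsilon'$ is what you need in your final paragraph to control the compensated piece when $1<\alpha<2$; with that correction the argument goes through.
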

\begin{proof}
To prove the solvability of the martingale problem for $\mathcal{L}_{n,t}$,
we use the same argument as in \cite[Theorem~(2.2)]{MR0433614}. Let
$\varphi\in C_{b}^{\infty}(\Rd)$ be such that $0\le\varphi\le1$,
$\varphi(y)=0$ for $|y|\le1/2,$ and $\varphi(y)=1$ for $|y|\ge1$.
For $0<\delta<1$, let $\varphi_{\delta}(y):=\varphi(y/\delta)$ and
define the kernel $M_{n}^{\delta}(t,x,\cdot)$ by
\[
M_{n}^{\delta}(t,x,dy):=\varphi_{\delta}(y)M_{n}(t,x,dy).
\]
 Set $c_{\delta}(t,x)=\mathbf{1}_{\alpha>1}\int_{\{|y|\le1\}}yM_{n}^{\delta}(t,x,dy)$.
Since
\begin{align*}
c_{\delta}(t,x) & =\mathbf{1}_{\alpha>1}\int_{\{|y|\le1\}}y\varphi_{\delta}(y)M_{n}(t,x,dy)\\
 & =\mathbf{1}_{\alpha>1}\int_{\Rd}\left(\int_{\{|y|\le1\}}y\varphi_{\delta}(y)M(t,x-z,dy)\right)\phi_{n}(z)dz\\
 & =\mathbf{1}_{\alpha>1}\int_{\Rd}\left(\int_{\{|y|\le1\}}y\varphi_{\delta}(y)M(t,z,dy)\right)\phi_{n}(x-z)dz,
\end{align*}
we see that $|\nabla_{x}c_{\delta}(t,x)|$ is bounded on $\hs$. Hence
$c_{\delta}(t,x)$ is globally Lipschitz continuous in $x$. Define
the differential operator $A_{t}^{\delta}$ by
\[
A_{t}^{\delta}f(x):=\sum_{i,j=1}^{d}a_{ij}\frac{\partial^{2}}{\partial x_{i}\partial x_{j}}f(x)+b\cdot\nabla f(x)-c_{\delta}(t,x)\cdot\nabla f(x).
\]
By the Lipschitz continuity (in the space variable $x$) of the coefficients
of $A_{t}^{\delta}$, there is for each $(s,x)$ a unique solution
$\mathbf{{Q}}_{\delta}^{s,x}$ to the martingale problem for $A_{t}^{\delta}$
starting from $(s,x)$, see, e.g., \cite[Theorem 5.1.1 and Corollary 5.1.3]{MR2190038}.
By \cite[Theorem 5.1.4]{MR2190038}, the mapping $(s,x)\mapsto\mathbf{{Q}}_{\delta}^{s,x}(E)$
is measurable for all $E\in\mathcal{D}$. Note that $A_{t}^{\delta}f(x)+\int_{\mathbb{R}^{d}\backslash\{0\}}[f(x+y)-f(x)]M_{n}^{\delta}(t,x,dy)=Lf(x)+K_{n,t}^{\delta}f(x)$,
where
\[
K_{n,t}^{\delta}f(x):=\int_{\mathbb{R}^{d}\backslash\{0\}}[f(x+y)-f(x)-\mathbf{1}_{\alpha>1}\mathbf{1}_{\{|y|\le1\}}y\cdot\nabla f(x)]M_{n}^{\delta}(t,x,dy).
\]
It follows from \cite[Theorem~(2.1)]{MR0433614} that the martingale
problem for $L+K_{n,t}^{\delta}$ is solvable. For $f\in C_{0}^{\infty}(\Rd)$,
we have
\begin{align*}
\left|K_{n,t}^{\delta}f(x)-K_{n,t}f(x)\right| & \le\int_{\{|y|\le\delta\}}\left|f(x+y)-f(x)-\mathbf{1}_{\alpha>1}y\cdot\nabla f(x)\right|M_{n}(t,x,dy)\\
 & \le\left\Vert f\right\Vert _{C_{b}^{2}(\Rd)}\int_{\{|y|\le\delta\}}\left(\mathbf{1}_{\alpha\le1}|y|+\mathbf{1}_{\alpha>1}|y|^{2}\right)M_{n}(t,x,dy)\\
 & \le\left\Vert f\right\Vert _{C_{b}^{2}(\Rd)}\int_{\{|y|\le\delta\}}|y|^{\alpha}M_{n}(t,x,dy)\\
 & \le\delta^{\alpha-\beta}\left\Vert f\right\Vert _{C_{b}^{2}(\Rd)}\int_{\{|y|\le\delta\}}|y|^{\beta}M_{n}(t,x,dy)\\
 & {\displaystyle \overset{(\ref{condition: sup M_n})}{\le}}\delta^{\alpha-\beta}\left\Vert f\right\Vert _{C_{b}^{2}(\Rd)}\sup_{t\geq0,x\in\mathbb{R}^{d}}\int_{\mathbb{R}^{d}\backslash\{0\}}1\wedge|y|^{\beta}M(t,x,dy),
\end{align*}
which implies that $K_{n,t}^{\delta}f\to K_{n,t}f$ uniformly as $\delta\to0$.
The rest of the proof goes in the same way as in \cite[Theorem~(2.2)]{MR0433614}.
We omit the details.
\end{proof}
Recall that $\lambda_{0}>0$ is the constant given in Corollary \ref{cor MAPAFL:There-exists-}.
\begin{lem}
\label{lem: resolvent L_n}Let $(s,x)\in\hs$ and $\mathbf{{P}}_{n}^{s,x}$
be a solution to the martingale problem for $\mathcal{L}_{n,t}=L+K_{n,t}$
starting from $(s,x)$. Then for any $\lambda\ge\lambda_{0}$ and
$g\in\mathcal{B}_{b}(\hs)$, we have
\begin{equation}
\mathbf{E}_{n}^{s,x}\Big[\int_{s}^{\infty}e^{-\lambda(t-s)}g(t,X_{t})dt\Big]=\sum_{k=0}^{\infty}R_{\lambda}(K_{n}R_{\lambda})^{k}g(s,x),\label{eqressn}
\end{equation}
where $\mathbf{E}_{n}^{s,x}[\cdot]$ denotes the expectation with
respect to the measure $\mathbf{{P}}_{n}^{s,x}$ and $K_{n}R_{\lambda}$
is defined by
\begin{align}
K_{n}R_{\lambda}g(t,x) & :=\int_{\mathbb{R}^{d}\backslash\{0\}}[R_{\lambda}g(t,x+y)-R_{\lambda}g(t,x)\nonumber \\
 & \qquad-\mathbf{1}_{\alpha>1}\mathbf{1}_{\{|y|\le1\}}y\cdot\nabla R_{\lambda}g(t,x)]M_{n}(t,x,dy),\quad(t,x)\in\hs.\label{defi: KR_lambda-1}
\end{align}
\end{lem}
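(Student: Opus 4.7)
My plan is to establish a probabilistic resolvent identity for the process under $\mathbf{P}_n^{s,x}$ and then iterate it, using that $k_\lambda<1/2$ for $\lambda\ge\lambda_0$ to ensure geometric convergence of the resulting Neumann series. Set
$$G_\lambda^{(n)} g(s,x) := \mathbf{E}_n^{s,x}\Big[\int_s^\infty e^{-\lambda(t-s)} g(t,X_t)\,dt\Big],\qquad g \in \mathcal{B}_b(\hs),$$
which is well-defined with $\|G_\lambda^{(n)} g\| \le \lambda^{-1}\|g\|$. The target formula (\ref{eqressn}) reads $G_\lambda^{(n)} g = \sum_{k=0}^{\infty} R_\lambda (K_n R_\lambda)^k g$, so everything reduces to proving the one-step identity
$$G_\lambda^{(n)} g \;=\; R_\lambda g \;+\; G_\lambda^{(n)}(K_n R_\lambda g)$$
and then iterating.

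For the resolvent identity, I would first restrict to $g\in C_c^\infty((0,\infty)\times\mathbb{R}^d)$. For such $g$, using the smoothness of $p_u$ (Lemma \ref{lemma: p_t tilde}) together with integration by parts to shift $x$-derivatives onto $g$, and the change of variables $v=t+u$ to compute the $t$-derivative, one checks that $F(t,x):=R_\lambda g(t,x)$ lies in $C_b^{1,2}(\hs)$ and satisfies the time-space Kolmogorov equation $\partial_t F + L F - \lambda F = -g$ pointwise. The defining martingale property on $C_b^2(\mathbb{R}^d)$ extends, via standard approximation applied to frozen time-slices $f(\cdot)=F(t_0,\cdot)$, to Dynkin's formula for $e^{-\lambda(t-s)}F(t,X_t)$ under $\mathbf{P}_n^{s,x}$. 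Substituting $\mathcal{L}_{n,u}=L+K_{n,u}$ and using the Kolmogorov equation shows the integrand reduces to $K_{n,u}(R_\lambda g)(u,X_u)-g(u,X_u)$; sending $T\to\infty$ and using boundedness of $F$ yields the desired identity. The extension from $C_c^\infty$ to arbitrary $g\in\mathcal{B}_b(\hs)$ follows by bounded pointwise approximation, combined with the uniform H\"older estimates on $R_\lambda$ and $K_n R_\lambda$ from Lemma \ref{lem4:MPAFL}, Lemma \ref{lem:MPAFLdifferrlam}, and Proposition \ref{prop MPAFL:For-any-}, together with dominated convergence for $G_\lambda^{(n)}$.

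Iterating the one-step identity $N$ times gives
$$G_\lambda^{(n)} g = \sum_{k=0}^{N-1} R_\lambda (K_n R_\lambda)^k g + G_\lambda^{(n)}(K_n R_\lambda)^N g.$$
By Fubini, $M_n$ inherits the same uniform bound (\ref{condition: sup M_n}) as $M$, hence the proof of Proposition \ref{prop MPAFL:For-any-} applies verbatim with $K_n$ in place of $K$, giving $\|K_n R_\lambda h\|\le k_\lambda\|h\|$ with the very same constant $k_\lambda$. For $\lambda\ge\lambda_0$ we have $k_\lambda<1/2$, so
$$\|G_\lambda^{(n)}(K_n R_\lambda)^N g\| \;\le\; \lambda^{-1}\,k_\lambda^N\,\|g\| \;\longrightarrow\; 0,$$
while the partial sums converge in sup-norm by the same geometric bound; this establishes (\ref{eqressn}).

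The main obstacle is the justification of Dynkin's formula for the time-space function $F=R_\lambda g$: the martingale problem is posed with $C_b^2(\mathbb{R}^d)$ test functions depending only on the space variable, so one must extend to a $(t,x)$-dependent $F$, which requires both sufficient joint regularity of $F$ and a careful choice of approximation class for $g$. Starting from $g\in C_c^\infty((0,\infty)\times\mathbb{R}^d)$, where the smoothing effect of $p_u$ and the damping factor $e^{-\lambda u}$ yield $F\in C_b^{1,2}$ cleanly, and then extending to bounded measurable $g$ by approximation is the most transparent route, since it separates the analytic identity $(\lambda-\partial_t-L)F=g$ from the probabilistic martingale argument and avoids the delicate issue of interpreting $\partial_t R_\lambda g$ for rough $g$.
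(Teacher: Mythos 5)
Your proposal is correct and follows essentially the same route as the paper: derive the one-step identity $V_n^{\lambda}g=R_{\lambda}g(s,x)+V_n^{\lambda}(K_nR_{\lambda}g)$ from the martingale property applied to $f=R_{\lambda}g$ via the time-space equation $(\lambda-\partial_t-L)R_{\lambda}g=g$, extend from smooth $g$ to $\mathcal{B}_b(\hs)$ by approximation and a monotone class argument, iterate, and kill the remainder using the $n$-uniform bound $\|K_nR_{\lambda}h\|\le k_{\lambda}\|h\|$ with $k_{\lambda}<1/2$ for $\lambda\ge\lambda_0$. The only cosmetic differences are that the paper integrates the undiscounted Dynkin identity against $e^{-\lambda(t-s)}dt$ rather than applying Dynkin to $e^{-\lambda(t-s)}F(t,X_t)$ directly, and passes through $C_0$ (uniform approximation) before the monotone class step, which is the precise form of your somewhat loosely stated ``bounded pointwise approximation'' from $C_c^{\infty}$.
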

\begin{proof}
For $\lambda>0$ and $f\in\mathcal{B}_{b}(\hs)$, define
\[
V_{n}^{\lambda}f:=\mathbf{E}_{n}^{s,x}\Big[\int_{s}^{\infty}e^{-\lambda(t-s)}f(t,X_{t})dt\Big].
\]
For $f\in C_{b}^{1,2}(\hs)$, we know that
\begin{align*}
 & f(t,X_{t})-f(s,X_{s})\\
= & ``Martingale"+\int_{s}^{t}(\frac{\partial f}{\partial u}+\mathcal{L}_{n,u}f)(u,X_{u})du.
\end{align*}
Taking expectations of both sides of the above equality gives
\begin{equation}
\mathbf{E}_{n}^{s,x}[f(t,X_{t})]-f(s,x)=\mathbf{E}_{n}^{s,x}\Big[\int_{s}^{t}(\frac{\partial f}{\partial u}+\mathcal{L}_{n,u}f)(u,X_{u})du\Big].\label{thmunieq0}
\end{equation}
Multiplying both sides of (\ref{thmunieq0}) by $e^{-\lambda(t-s)}$,
integrating with respect to $t$ from $0$ to $\infty$ and then applying
Fubini's theorem, we get
\begin{align}
 & \mathbf{E}_{n}^{s,x}\Big[\int_{s}^{\infty}e^{-\lambda(t-s)}f(t,X_{t})dt\Big]\nonumber \\
= & \frac{1}{\lambda}f(s,x)+\mathbf{E}_{n}^{s,x}\Big[\int_{s}^{\infty}e^{-\lambda(t-s)}\int_{s}^{t}\big(\frac{\partial f}{\partial u}+\mathcal{L}_{n,u}f\big)(u,X_{u})dudt\Big]\nonumber \\
= & \frac{1}{\lambda}f(s,x)+\frac{1}{\lambda}\mathbf{E}_{n}^{s,x}\Big[\int_{s}^{\infty}e^{-\lambda(u-s)}\big(\frac{\partial f}{\partial u}+\mathcal{L}_{n,u}f\big)(u,X_{u})du\Big].\label{eq: Fubini}
\end{align}
Therefore, for $f\in C_{b}^{1,2}(\hs)$,
\begin{equation}
\lambda V_{n}^{\lambda}f=f(s,x)+V_{n}^{\lambda}\Big(\frac{\partial f}{\partial t}+\mathcal{L}_{n,t}f\Big).\label{thmunieq101}
\end{equation}
If $g\in C_{b}^{1,2}(\hs)$, then $f:=R_{\lambda}g\in C_{b}^{1,2}(\hs)$
and
\begin{equation}
{\displaystyle \lambda f(t,y)-Lf(t,y)-\frac{\partial}{\partial t}f(t,y)=g(t,y),\quad(t,y)\in\hs,}\label{eq for para. resol. eq.: MPAFL}
\end{equation}
see, e.g., the proof of \cite[Proposition~3.8]{jin2015weak}. Substituting
this $f$ in (\ref{thmunieq101}), we obtain $V_{n}^{\lambda}g=R_{\lambda}g(s,x)+V_{n}^{\lambda}(K_{n}R_{\lambda}g)$
for $g\in C_{b}^{1,2}(\hs)$. If $g\in C_{0}(\mathbb{R}_{+}\times\mathbb{R}^{d})$,
namely, $g$ is continuous with compact support, then there exist
$g_{k}\in C_{b}^{1,2}(\mathbb{R}_{+}\times\mathbb{R}^{d})$ such that
$g_{k}\rightarrow g$ boundedly and uniformly as $k\rightarrow\infty$.
\textcolor{black}{It follows from (}\ref{esti: kr_lambda}) that\textcolor{black}{{}
$K_{n}R_{\lambda}g_{k}\rightarrow KR_{\lambda}g$ boundedly and pointwise
as $k\rightarrow\infty$.} By the dominated convergence theorem, we
have
\begin{align*}
V_{n}^{\lambda}g=\lim_{k\to\infty}V_{n}^{\lambda}g_{k} & =\lim_{k\to\infty}\left\{ R_{\lambda}g_{k}(s,x)+V_{n}^{\lambda}(K_{n}R_{\lambda}g_{k})\right\} \\
 & =R_{\lambda}g(s,x)+V_{n}^{\lambda}(K_{n}R_{\lambda}g),\quad g\in C_{0}(\mathbb{R}_{+}\times\mathbb{R}^{d}).
\end{align*}
Then by a standard monotone class argument, we arrive at
\begin{equation}
V_{n}^{\lambda}g=R_{\lambda}g(s,x)+V_{n}^{\lambda}(K_{n}R_{\lambda}g),\quad g\in\mathcal{B}_{b}(\hs).\label{WUSeqlemma43vnl}
\end{equation}
For $g\in\mathcal{B}_{b}(\hs)$, we thus have
\begin{align}
V_{n}^{\lambda}g= & R_{\lambda}g(s,x)+V_{n}^{\lambda}(K_{n}R_{\lambda}g)\nonumber \\
{\displaystyle \overset{(\ref{WUSeqlemma43vnl})}{=}} & R_{\lambda}g(s,x)+R_{\lambda}K_{n}R_{\lambda}g(s,x)+V_{n}^{\lambda}(K_{n}R_{\lambda})^{2}g\nonumber \\
= & \cdots=\sum_{k=0}^{i}R_{\lambda}(K_{n}R_{\lambda})^{k}g(s,x)+V_{n}^{\lambda}(K_{n}R_{\lambda})^{i+1}g.\label{eqsnforn}
\end{align}

Let $k_{\lambda}>0$ be as in (\ref{defi: k_lambda}). By (\ref{condition: sup M_n})
and Proposition \ref{prop MPAFL:For-any-}, we have
\[
\|K_{n}R_{\lambda}g\|\le k_{\lambda}\Vert g\Vert,\quad\forall n\in\BN,\ g\in\mathcal{B}_{b}(\hs).
\]
According to Corollary \ref{cor MAPAFL:There-exists-}, we have $k_{\lambda}<1/2$
for $\lambda\ge\lambda_{0}$. Therefore, for all $i,n\in\mathbb{N}$,
$\lambda\ge\lambda_{0}$ and $g\in\mathcal{B}_{b}(\hs)$,
\[
|V_{n}^{\lambda}(K_{n}R_{\lambda})^{i}g|\le\lambda^{-1}\big(k_{\lambda}\big)^{i}\Vert g\Vert\le\lambda^{-1}2^{-i}\Vert g\Vert
\]
and
\[
\left\Vert R_{\lambda}(K_{n}R_{\lambda})^{i}g\right\Vert \le\lambda^{-1}\big(k_{\lambda}\big)^{i}\Vert g\Vert\le\lambda^{-1}2^{-i}\Vert g\Vert.
\]
Letting $i\to\infty$ in (\ref{eqsnforn}) gives (\ref{eqressn}).
This completes the proof.
\end{proof}
\begin{rem}
In view of (\ref{condition: sup M_n}), we can repeat the proof of
Proposition \ref{prop: Krylov} to obtain that for each $\lambda\ge\lambda_{0}$,
\begin{equation}
\left\Vert \sum_{k=0}^{\infty}R_{\lambda}(K_{n}R_{\lambda})^{k}g\right\Vert \le l_{\lambda}\|g\|_{L^{q}([0,T];L^{p}(\Rd))},\label{defi 2: l_lambda}
\end{equation}
where $d/p+\alpha/q<\alpha-\beta$ and $g\in\mathcal{B}_{b}([0,\infty)\times\mathbb{R}^{d})\cap L^{q}([0,T];L^{p}(\Rd))$
is an arbitrary function satisfying $supp(g)\subset[0,T]\times\Rd$.
Indeed, by (\ref{defi: c tilde _lambda}) and (\ref{defi: l_lambda}),
the constant $l_{\lambda}>0$ here can be chosen to be the same as
in (\ref{eq: Prop 4.3, to prove}). In particular, $l_{\lambda}$
in (\ref{defi 2: l_lambda}) is independent of $n\in\BN.$
\end{rem}
\begin{cor}
\label{cor: krylov esti for p_n}Let $\mathbf{{P}}_{n}^{s,x}$ be
as in Lemma \ref{lem: resolvent L_n}. Let $p>(d+\alpha)/(\alpha-\beta)$.
For each $T>s$, there exists a constant $C_{T}>0$, which is independent
of $n$, such that
\[
\mathbf{E}_{n}^{s,x}\Big[\int_{s}^{T}|f(t,X_{t})|dt\Big]\le C_{T}\|f\|_{L^{p}\left([0,T]\times\Rd\right)},\quad\forall f\in L^{p}\left([0,T]\times\Rd\right).
\]
\end{cor}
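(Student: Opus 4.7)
The plan is to apply Lemma \ref{lem: resolvent L_n} to the nonnegative truncation of $f$ restricted to $[s,T]$, bound the resulting series by the $L^p$ estimate in the remark preceding the corollary, and then strip off the exponential weight using $T < \infty$.

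First I would reduce to the bounded case. Given $f \in L^p([0,T]\times\mathbb{R}^d)$, set $f_N(t,y) := (|f(t,y)| \wedge N)\mathbf{1}_{[s,T]}(t)$. Each $f_N$ lies in $\mathcal{B}_b(\hs)\cap L^p([s,T]\times\mathbb{R}^d)$ and has support in $[0,T]\times\mathbb{R}^d$, so Lemma \ref{lem: resolvent L_n} applies with $g = f_N$ and any $\lambda \ge \lambda_0$:
\[
\mathbf{E}_n^{s,x}\Big[\int_s^T e^{-\lambda(t-s)} f_N(t,X_t)\,dt\Big] = \sum_{k=0}^\infty R_\lambda (K_n R_\lambda)^k f_N(s,x).
\]

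Next I would invoke the remark immediately preceding the corollary. Since $p > (d+\alpha)/(\alpha-\beta)$, choosing $q=p$ gives
\[
\frac{d}{p} + \frac{\alpha}{q} = \frac{d+\alpha}{p} < \alpha-\beta,
\]
so the remark yields a constant $l_\lambda>0$, independent of $n$, such that
\[
\Big\|\sum_{k=0}^\infty R_\lambda (K_n R_\lambda)^k f_N\Big\| \le l_\lambda \|f_N\|_{L^p([0,T];L^p(\mathbb{R}^d))} \le l_\lambda \|f\|_{L^p([0,T]\times\mathbb{R}^d)}.
\]
Combining with the identity above and using $e^{-\lambda(t-s)} \ge e^{-\lambda(T-s)}$ for $t \in [s,T]$, I obtain
\[
\mathbf{E}_n^{s,x}\Big[\int_s^T f_N(t,X_t)\,dt\Big] \le e^{\lambda(T-s)} l_\lambda \|f\|_{L^p([0,T]\times\mathbb{R}^d)}.
\]

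Finally, fixing $\lambda = \lambda_0$ and letting $N \to \infty$, the monotone convergence theorem on the left-hand side yields the desired inequality with $C_T := e^{\lambda_0(T-s)} l_{\lambda_0}$, which manifestly does not depend on $n$ because $l_{\lambda_0}$ from the remark is $n$-independent. There is no real obstacle here: the entire content has been prepared by Lemma \ref{lem: resolvent L_n} and the $n$-uniform $L^p$ bound on $\sum_k R_\lambda (K_n R_\lambda)^k$, and the argument is a direct specialization with the harmless truncation step to accommodate general $L^p$ integrands.
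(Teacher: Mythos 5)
Your proposal is correct and follows essentially the same route as the paper: apply the $n$-uniform bound $\|\sum_{k}R_{\lambda}(K_{n}R_{\lambda})^{k}g\|\le l_{\lambda}\|g\|_{L^{q}([0,T];L^{p})}$ with $q=p$ to the (truncated, bounded) integrand via Lemma \ref{lem: resolvent L_n}, remove the exponential weight at the cost of $e^{\lambda_{0}(T-s)}$, and pass to general $f\in L^{p}$ by monotone convergence. The explicit truncation $f_{N}=(|f|\wedge N)\mathbf{1}_{[s,T]}$ is just a concrete implementation of the paper's final monotone-convergence step.
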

\begin{proof}
Let $f\in\mathcal{B}_{b}([0,T]\times\mathbb{R}^{d})\cap L^{p}\left([0,T]\times\Rd\right)$.
Applying (\ref{defi 2: l_lambda}) with $p=q>(d+\alpha)/(\alpha-\beta)$,
we get
\begin{align*}
\mathbf{E}_{n}^{s,x}\Big[\int_{s}^{T}|f(t,X_{t})|dt\Big] & \le e^{\lambda_{0}(T-s)}\mathbf{E}_{n}^{s,x}\Big[\int_{s}^{\infty}e^{-\lambda_{0}(t-s)}\mathbf{1}_{[0,T]}(t)|f(t,X_{t})|dt\Big]\\
 & \overset{(\ref{eqressn})}{\le}e^{\lambda_{0}(T-s)}\sum_{k=0}^{\infty}R_{\lambda_{0}}(K_{n}R_{\lambda_{0}})^{k}\left(\mathbf{1}_{[0,T]}(t)|f|\right)(s,x)\\
 & \overset{(\ref{defi 2: l_lambda})}{\le}l_{\lambda_{0}}e^{\lambda_{0}(T-s)}\|f\|_{L^{p}([0,T]\times\Rd)}.
\end{align*}
 For a general $f\in L^{p}\left([0,T]\times\Rd\right)$, the assertion
follows from the monotone convergence theorem.
\end{proof}
We are now ready to prove Theorem \ref{thm: main}.

\subsection*{Proof of Theorem \ref{thm: main}}

``\emph{Existence}'': \textcolor{black}{Let $(s,x)\in\hs$ be fixed.
It follows from Lemma }\ref{lem: existence L_n}\textcolor{black}{{}
that there exists a solution $\mathbf{{P}}_{n}^{s,x}$ to the martingale
problem for $\mathcal{L}_{n,t}=L+K_{n,t}$ starting from $(s,x)$. }

By (\ref{condition: sup M_n}) and \cite[Theorem~(A.1)]{MR0433614},
the family $\{\mathbf{{P}}_{n}^{s,x},\ n\in\mathbb{N}\}$ is tight.
Let $\mathbf{{P}}^{s,x}$ be a limit point of $\{\mathbf{{P}}_{n}^{s,x},\ n\in\mathbb{N}\}$.
Then there exists a subsequence of $\left(\mathbf{{P}}_{n}^{s,x}\right)_{n\in\mathbb{N}}$
which converges weekly to $\mathbf{{P}}^{s,x}$. For simplicity, we
denote this subsequence still by $\left(\mathbf{{P}}_{n}^{s,x}\right)_{n\in\mathbb{N}}$.

We next show that $\mathbf{{P}}^{s,x}$ is a solution to the martingale
problem for $\mathcal{L}_{t}$ starting from $(s,x)$. Let $f\in C_{0}^{\infty}(\mathbb{R}^{d})$
be arbitrary. By \cite[Theorem~(1.1)]{MR0433614}, it suffices to
show that
\[
f(X_{t})-\int_{s}^{t}\mathcal{L}_{u}f(X_{u})du
\]
is a $\mathbf{P}^{s,x}$-martingale after time $s$. Suppose $s\le t_{1}\le t_{2}$,
$0\le r_{1}\le\cdots\le r_{l}\le t_{1}$ and $g_{1},\cdots,g_{l}\in C_{0}(\mathbb{R}^{d})$,
where $l\in\BN$. Set $Y=\prod_{j=1}^{l}g_{j}(X_{r_{j}}).$ It reduces
to show that
\begin{equation}
\mathbf{E}^{s,x}\Big[Y\big(f(X_{t_{2}})-f(X_{t_{1}})-\int_{t_{1}}^{t_{2}}\mathcal{L}_{u}f(X_{u})du\big)\Big]=0.\label{eq, thm 5.3: to show}
\end{equation}

We will complete the proof of (\ref{eq, thm 5.3: to show}) in four
steps. Firstly, note that by \cite[~Chap.~3,~Lemma~7.7]{MR838085},
there exists a countable set $I\subset\BR_{+}$ such that
\begin{equation}
\mathbf{P}^{s,x}(X_{t-}=X_{t})=1,\qquad\forall t\in\BR_{+}\setminus I.\label{WUSeqthm4545}
\end{equation}
Since $\BR_{+}\setminus I$ is dense in $\BR_{+}$ and $t\mapsto X_{t}(\omega)$,
$\omega\in D$, is right-continuous, it is enough to show (\ref{eq, thm 5.3: to show})
by additionally assuming that
\begin{equation}
r_{1},\cdots,r_{l},t_{1},t_{2}\in\BR_{+}\setminus I.\label{extra ass, I}
\end{equation}
So from now on, we assume that (\ref{extra ass, I}) is true.

``\textit{Step 1}'': We establish an estimate of Krylov's type for
$\mathbf{P}^{s,x}$. Let $p>(d+\alpha)/(\alpha-\beta)$.\textcolor{black}{{}
By Corollary }\ref{cor: krylov esti for p_n}, for each $T>s$, there
exists a constant $C_{T}>0$ such that
\begin{equation}
\sup_{n\in\BN}\mathbf{E}_{n}^{s,x}\Big[\int_{s}^{T}|f(t,X_{t})|dt\Big]\le C_{T}\|f\|_{L^{p}\left([0,T]\times\Rd\right)},\quad\forall f\in L^{p}\left([0,T]\times\Rd\right).\label{eq: kry p_n}
\end{equation}
It follows that for each $T>s$,
\begin{equation}
\mathbf{E}^{s,x}\Big[\int_{s}^{T}|f(t,X_{t})|dt\Big]\le C_{T}\|f\|_{L^{p}\left([0,T]\times\Rd\right)},\quad\forall f\in L^{p}\left([0,T]\times\Rd\right).\label{eq: kry P}
\end{equation}
Indeed, if $f\in C_{0}([0,T]\times\Rd)$, namely, $f$ is continuous
on $[0,T]\times\Rd$ with compact support, then (\ref{eq: kry P})
follows from (\ref{eq: kry p_n}) and the weak convergence of $\mathbf{{P}}_{n}^{s,x}$
to $\mathbf{{P}}^{s,x}$. By a standard monotone class argument, we
obtain (\ref{eq: kry P}) for all $f\in L^{p}\left([0,T]\times\Rd\right)$.

``\textit{Step 2}'': We show that
\begin{align}
 & \lim_{n\to\infty}\mathbf{E}_{n}^{s,x}\Big[Y\big(f(X_{t_{2}})-f(X_{t_{1}})-\int_{t_{1}}^{t_{2}}Lf(X_{u})du\big)\Big]\nonumber \\
 & \quad=\mathbf{E}^{s,x}\Big[Y\big(f(X_{t_{2}})-f(X_{t_{1}})-\int_{t_{1}}^{t_{2}}Lf(X_{u})du\big)\Big].\label{eq 3, thm 5.3: to show}
\end{align}
By Skorokhod's representation theorem, there exists a probability
space $(\Omega,\mathcal{A},\mathbf{Q})$ and random elements $\xi,\xi_{1},\cdots,\xi_{n},\cdots:\Omega\to D$
such that $\mathbf{{P}}_{n}^{s,x}=\mathbf{Q}\circ\xi_{n}^{-1}$, $\mathbf{{P}}^{s,x}=\mathbf{Q}\circ\xi^{-1}$
and $d(\xi_{n},\xi)\to0$ $\mathbf{Q}-\mathrm{a.s.}$, where $d$
is the Skorokhod metric on $D$. It follows from (\ref{WUSeqthm4545})
and \cite[~Chap.~3,~Prop.~5.2]{MR838085} that
\begin{equation}
\lim_{n\to\infty}X_{t}(\xi_{n})=X_{t}(\xi)\quad\mathbf{Q}\mbox{-a.s.},\qquad\forall t\in\BR_{+}\setminus I.\label{WUSeqthm4546}
\end{equation}
Let $\mathbf{E}[\cdot]$ denote the expectation with respect to the
measure $\mathbf{Q}$. By (\ref{extra ass, I}) and the dominated
convergence theorem, we have
\begin{align*}
 & \lim_{n\to\infty}\mathbf{E}\Big[Y(\xi_{n})\Big\{ f(X_{t_{2}}(\xi_{n}))-f(X_{t_{1}}(\xi_{n}))-\int_{t_{1}}^{t_{2}}Lf(X_{u}(\xi_{n}))du\Big\}\Big]\\
 & {\displaystyle \quad\overset{(\ref{WUSeqthm4546})}{=}}\mathbf{E}\Big[Y(\xi)\Big\{ f(X_{t_{2}}(\xi))-f(X_{t_{1}}(\xi))-\int_{t_{1}}^{t_{2}}Lf(X_{u}(\xi))du\Big\}\Big],
\end{align*}
which implies (\ref{eq 3, thm 5.3: to show}).

``\textit{Step 3}'': We show that
\begin{equation}
\lim_{n\to\infty}\mathbf{E}_{n}^{s,x}\Big[Y\int_{t_{1}}^{t_{2}}K_{n,u}f(X_{u})du\Big]=\mathbf{E}^{s,x}\Big[Y\int_{t_{1}}^{t_{2}}K_{u}f(X_{u})du\Big].\label{eq 2, thm 5.3: to show}
\end{equation}
Note that $Y$ is bounded. Let $C_{Y}:=\sup_{\omega\in D}|Y(\omega)|<\infty.$
For $r>0$ let $\chi_{r}$ be a continuous non-negative function on
$\Rd$ with $\chi_{r}(x)=1$ for $|x|\le r$ , $\chi_{r}(x)=0$ for
$|x|>r+1$ and $0\le$$\chi_{r}(x)\le1$ for $r<|x|\le r+1$; moreover,
we can choose $\chi_{r}$ such that $\chi_{r}$ is monotone in $r$,
namely, $\chi_{r_{1}}\le\chi_{r_{2}}$ if $r_{1}\le r_{2}$. Note
that $|K_{n,u}f|$ and $|K_{u}f|$ are both bounded, say, by a positive
constant $C_{K}$. For $i\in\BN$, we have
\begin{align*}
 & \left|\mathbf{E}_{i}^{s,x}\Big[Y\int_{t_{1}}^{t_{2}}K_{n,u}f(X_{u})du\Big]-\mathbf{E}_{i}^{s,x}\Big[Y\int_{t_{1}}^{t_{2}}K_{u}f(X_{u})du\Big]\right|\\
 & \ \le C_{Y}\mathbf{E}_{i}^{s,x}\Big[\int_{t_{1}}^{t_{2}}|K_{n,u}f-K_{u}f|(X_{u})du\Big]\\
 & \ \le C_{Y}\mathbf{E}_{i}^{s,x}\Big[\int_{t_{1}}^{t_{2}}(|K_{n,u}f-K_{u}f\ast\phi_{n}|+|K_{u}f\ast\phi_{n}-K_{u}f|)(X_{u})du\Big]\\
 & \ \le C_{Y}\mathbf{E}_{i}^{s,x}\Big[\int_{t_{1}}^{t_{2}}|K_{n,u}f-K_{u}f\ast\phi_{n}|(X_{u})du\Big]+2C_{Y}C_{K}\mathbf{E}_{i}^{s,x}\Big[\int_{t_{1}}^{t_{2}}(1-\chi_{r})(X_{u})du\Big]\\
 & \quad+C_{Y}\mathbf{E}_{i}^{s,x}\Big[\int_{t_{1}}^{t_{2}}\chi_{r}(X_{u})|K_{u}f\ast\phi_{n}-K_{u}f|(X_{u})du\Big]\\
 & \ {\displaystyle \overset{(\ref{eq: kry p_n})}{\le}}t_{2}C_{Y}\|K_{n,u}f-K_{u}f\ast\phi_{n}\|+2C_{Y}C_{K}\sup_{i\in\BN}\mathbf{E}_{i}^{s,x}\Big[\int_{t_{1}}^{t_{2}}(1-\chi_{r})(X_{u})du\Big]\\
 & \quad+C_{Y}C_{t_{2}}\left\Vert \chi_{r}(K_{u}f\ast\phi_{n}-K_{u}f)\right\Vert _{L^{p}([0,t_{2}]\times\Rd)}\\
 & \ =:J_{1}+J_{2}+J_{3}.
\end{align*}
For any given $\epsilon_{1}>0$, by dominated convergence theorem,
we can find sufficiently large $r_{0}>0$ such that
\begin{equation}
\mathbf{E}^{s,x}\Big[\int_{t_{1}}^{t_{2}}(1-\chi_{r_{0}})(X_{u})du\Big]<\epsilon_{1}.\label{esti 1, x_R}
\end{equation}
By the weak convergence of $\mathbf{{P}}_{i}^{s,x}$ to $\mathbf{{P}}^{s,x}$,
we have
\[
\lim_{i\to\infty}\mathbf{E}_{i}^{s,x}\Big[\int_{t_{1}}^{t_{2}}(1-\chi_{r_{0}})(X_{u})du\Big]=\mathbf{E}^{s,x}\Big[\int_{t_{1}}^{t_{2}}(1-\chi_{r_{0}})(X_{u})du\Big].
\]
So there exists $i_{0}\in\BN$ such that
\begin{equation}
\sup_{i>i_{0}}\mathbf{E}_{i}^{s,x}\Big[\int_{t_{1}}^{t_{2}}(1-\chi_{r_{0}})(X_{u})du\Big]\le2\epsilon_{1}.\label{esti 2, x_R}
\end{equation}
Similarly to (\ref{esti 1, x_R}), for $i=1,2,\cdots,i_{0}$, we can
find $r_{1}>r_{0}$ such that
\begin{equation}
\sup_{1\le i\le i_{0}}\mathbf{E}_{i}^{s,x}\Big[\int_{t_{1}}^{t_{2}}(1-\chi_{r_{1}})(X_{u})du\Big]<\epsilon_{1}.\label{esti 3, x_R}
\end{equation}
Combining (\ref{esti 2, x_R}) and (\ref{esti 3, x_R}) and noting
that $\chi_{r}$ is non-decreasing in $r$, we get
\[
\sup_{i\in\BN}\mathbf{E}_{i}^{s,x}\Big[\int_{t_{1}}^{t_{2}}(1-\chi_{r})(X_{u})du\Big]<3\epsilon_{1},\quad r\ge r_{1}.
\]
Hence we have shown that $\lim_{r\to\infty}J_{2}=0.$ By Lemma \ref{lem: delta },
we have $J_{1}\to0$ as $n\to\infty$. It is also easy to see that
$J_{3}\to0$ as $n\to\infty$. With a simple ``$\epsilon-\delta$''-argument,
we obtain
\begin{equation}
\lim_{n\to\infty}\left|\mathbf{E}_{i}^{s,x}\Big[Y\int_{t_{1}}^{t_{2}}K_{n,u}f(X_{u})du\Big]-\mathbf{E}_{i}^{s,x}\Big[Y\int_{t_{1}}^{t_{2}}K_{u}f(X_{u})du\Big]\right|=0,\label{neweqek-1}
\end{equation}
and the convergence is uniform with respect to $i\in\BN.$

Similarly to (\ref{neweqek-1}), we have
\begin{equation}
\lim_{n\to\infty}\left|\mathbf{E}^{s,x}\Big[Y\int_{t_{1}}^{t_{2}}K_{n,u}f(X_{u})du\Big]-\mathbf{E}^{s,x}\Big[Y\int_{t_{1}}^{t_{2}}K_{u}f(X_{u})du\Big]\right|=0.\label{neweqesx}
\end{equation}
By (\ref{neweqek-1}) and (\ref{neweqesx}), for any given $\epsilon>0$,
we can find $n_{1}\in\BN$, which is independent of $i$, such that
for all $n,m\ge n_{1}$ and $i\in\BN$,
\[
\left|\mathbf{E}^{s,x}\Big[Y\int_{t_{1}}^{t_{2}}K_{n,u}f(X_{u})du\Big]-\mathbf{E}^{s,x}\Big[Y\int_{t_{1}}^{t_{2}}K_{u}f(X_{u})du\Big]\right|<\epsilon
\]
and
\[
\left|\mathbf{E}_{i}^{s,x}\Big[Y\int_{t_{1}}^{t_{2}}K_{n,u}f(X_{u})du\Big]-\mathbf{E}_{i}^{s,x}\Big[Y\int_{t_{1}}^{t_{2}}K_{m,u}f(X_{u})du\Big]\right|<\epsilon.
\]
Similarly to (\ref{eq 3, thm 5.3: to show}), there exists $n_{2}\in\BN$
such that for $n\ge n_{2}$,
\[
\left|\mathbf{E}_{n}^{s,x}\Big[Y\int_{t_{1}}^{t_{2}}K_{n_{1},u}f(X_{u})du\Big]-\mathbf{E}^{s,x}\Big[Y\int_{t_{1}}^{t_{2}}K_{n_{1},u}f(X_{u})du\Big]\right|<\epsilon.
\]
If $n\ge\sup\{n_{1},n_{2}\}$, then

\begin{align*}
 & \left|\mathbf{E}_{n}^{s,x}\Big[Y\int_{t_{1}}^{t_{2}}K_{n,u}f(X_{u})du\Big]-\mathbf{E}^{s,x}\Big[Y\int_{t_{1}}^{t_{2}}K_{u}(X_{u})du\Big]\right|\\
\le & \left|\mathbf{E}_{n}^{s,x}\Big[Y\int_{t_{1}}^{t_{2}}K_{n,u}f(X_{u})du\Big]-\mathbf{E}_{n}^{s,x}\Big[Y\int_{t_{1}}^{t_{2}}K_{n_{1},u}f(X_{u})du\Big]\right|\\
 & \ +\left|\mathbf{E}_{n}^{s,x}\Big[Y\int_{t_{1}}^{t_{2}}K_{n_{1},u}f(X_{u})du\Big]-\mathbf{E}^{s,x}\Big[Y\int_{t_{1}}^{t_{2}}K_{n_{1},u}f(X_{u})du\Big]\right|\\
 & \quad\ +\left|\mathbf{E}^{s,x}\Big[Y\int_{t_{1}}^{t_{2}}K_{n_{1},u}f(X_{u})du\Big]-\mathbf{E}^{s,x}\Big[Y\int_{t_{1}}^{t_{2}}K_{u}f(X_{u})du\Big]\right|\le3\epsilon.
\end{align*}
So (\ref{eq 2, thm 5.3: to show}) is true.

``\textit{Step 4}'': We finally prove (\ref{eq, thm 5.3: to show})
under the condition (\ref{extra ass, I}). Since $\mathbf{P}_{n}^{s,x}$
solves the martingale problem for $\mathcal{L}_{n,t}$, it follows
that
\begin{equation}
\mathbf{E}_{n}^{s,x}\Big[Y\big(f(X_{t_{2}})-f(X_{t_{1}})-\int_{t_{1}}^{t_{2}}\mathcal{L}_{u}f(X_{u})du\big)\Big]=0.\label{eqmartingale}
\end{equation}
So (\ref{eq, thm 5.3: to show}) follows from (\ref{eqmartingale}),
(\ref{eq 3, thm 5.3: to show}) and (\ref{eq 2, thm 5.3: to show}).

This completes the proof of existence.

``\emph{Uniqueness}'': Let $(s,x)\in\hs$ be arbitrary and $\mathbf{{\tilde{P}}}^{s,x}$
be a solution to the martingale problem for $\mathcal{L}_{t}$ starting
from $(s,x)$. For each $f\in C_{b}^{1,2}(\mathbb{R}_{+}\times\mathbb{R}^{d})$,
\[
f(t,X_{t})-f(s,X_{s})-{\displaystyle \int_{s}^{t}\left(\frac{\partial f}{\partial u}+\mathcal{L}_{u}f\right)(u,X_{u})du}
\]
is an $\mathcal{F}_{t}$-martingale after $s$ with respect to the
measure $\mathbf{{\tilde{P}}}^{s,x}$. For any $s\leq t_{1}<t,\ C\in\mathcal{F}_{t_{1},}$
we thus have
\begin{equation}
\mathbf{{\tilde{E}}}^{s,x}[\mathbf{{1}}_{C}f(t,X_{t})]=\mathbf{{\tilde{E}}}^{s,x}[\mathbf{{1}}_{C}f(t_{1},X_{t_{1}})]+\mathbf{{\tilde{E}}}^{s,x}\left[\mathbf{{1}}_{C}\int_{t_{1}}^{t}\left(\frac{\partial f}{\partial u}+\mathcal{L}_{u}f\right)(u,X_{u})du]\right].\label{eq5.1:MPAFL}
\end{equation}
Similarly to (\ref{eq: Fubini}), by multiplying both sides of (\ref{eq5.1:MPAFL})
by $\exp(-\lambda(t-t_{1}))$ and then integrating with respect to
$t$ from $t_{1}$ to $\infty$, we get
\begin{align*}
 & \mathbf{{\tilde{E}}}^{s,x}\left[\mathbf{{1}}_{C}{\displaystyle \int_{t_{1}}^{\infty}e^{-\lambda(t-t_{1})}f(t,X_{t})dt}\right]\\
 & \quad=\lambda^{-1}\mathbf{{\tilde{E}}}^{s,x}\left[\mathbf{{1}}_{C}f(t_{1},X_{t_{1}})\right]+\lambda^{-1}\mathbf{{\tilde{E}}}^{s,x}\left[\mathbf{{1}}_{C}{\displaystyle \int_{t_{1}}^{\infty}e^{-\lambda(u-t_{1})}\left(\frac{\partial f}{\partial u}+\mathcal{L}_{u}f\right)(u,X_{u})du}\right].
\end{align*}
 Therefore,
\begin{align}
 & \mathbf{{\tilde{E}}}^{s,x}\left[\int_{t_{1}}^{\infty}e^{-\lambda(t-t_{1})}f(t,X_{t})dt|\mathcal{F}_{t_{1}}\right]\nonumber \\
\text{} & {\displaystyle \quad=\lambda^{-1}f(t_{1},X_{t_{1}})+\lambda^{-1}\mathbf{{\tilde{E}}}^{s,x}\left[\int_{t_{1}}^{\infty}e^{-\lambda(t-t_{1})}\left(\frac{\partial f}{\partial t}+\mathcal{L}_{t}f\right)(t,X_{t})dt|\mathcal{F}_{t_{1}}\right].}\label{eq5.2:MPAFL}
\end{align}

If $g\in C_{b}^{1,2}(\mathbb{R}_{+}\times\mathbb{R}^{d})$, then $f:=R_{\lambda}g\in C_{b}^{1,2}(\mathbb{R}_{+}\times\mathbb{R}^{d})$
and (\ref{eq for para. resol. eq.: MPAFL}) holds. Substituting this
$f$ in (\ref{eq5.2:MPAFL}), we obtain
\begin{align}
 & \mathbf{{\tilde{E}}}^{s,x}\left[\int_{t_{1}}^{\infty}e^{-\lambda(t-t_{1})}g(t,X_{t})dt|\mathcal{F}_{t_{1}}\right]\nonumber \\
 & \quad=R_{\lambda}g(t_{1},X_{t_{1}})+\mathbf{{\tilde{E}}}^{s,x}\left[{\displaystyle \int_{t_{1}}^{\infty}e^{-\lambda(t-t_{1})}KR_{\lambda}g(t,X_{t})dt|\mathcal{F}_{t_{1}}}\right].\label{eq5.3:MPAFL}
\end{align}
With a similar argument as in the proof of (\ref{WUSeqlemma43vnl}),
we see that (\ref{eq5.3:MPAFL}) is true for all $g\in\mathcal{B}_{b}(\mathbb{R}_{+}\times\mathbb{R}^{d})$.

If $g\in\mathcal{B}_{b}(\mathbb{R}_{+}\times\mathbb{R}^{d})$, then
$KR_{\lambda}g\in\mathcal{B}_{b}(\mathbb{R}_{+}\times\mathbb{R}^{d})$.
By (\ref{eq5.3:MPAFL}) and a simple iteration, we obtain for each
$k\in\mathbb{{N}}$,
\begin{align*}
 & \mathbf{{\tilde{E}}}^{s,x}\left[\int_{t_{1}}^{\infty}e^{-\lambda(t-t_{1})}g(t,X_{t})dt|\mathcal{F}_{t_{1}}\right]\\
 & \quad=\sum_{i=0}^{k}R_{\lambda}(KR_{\lambda})^{i}g(t_{1},X_{t_{1}})+\mathbf{{\tilde{E}}}^{s,x}\left[{\displaystyle \int_{t_{1}}^{\infty}e^{-\lambda(t-t_{1})}(KR_{\lambda})^{k+1}g(t,X_{t})dt|\mathcal{F}_{t_{1}}}\right].
\end{align*}
By Proposition \ref{prop MPAFL:For-any-} and Corollary \ref{cor MAPAFL:There-exists-},
we see that
\begin{equation}
\mathbf{{\tilde{E}}}^{s,x}\left[\int_{t_{1}}^{\infty}e^{-\lambda(t-t_{1})}g(t,X_{t})dt|\mathcal{F}_{t_{1}}\right]=\sum_{i=0}^{\infty}R_{\lambda}(KR_{\lambda})^{i}g(t_{1},X_{t_{1}})=G_{\lambda}g(t_{1},X_{t_{1}})\label{eq5.5:MPAFL}
\end{equation}
for all $\lambda\ge\lambda_{0}$ and $g\in\mathcal{B}_{b}(\mathbb{R}_{+}\times\mathbb{R}^{d})$.

Note that the choice of $t_{1}\in[s,\infty)$ in (\ref{eq5.5:MPAFL})
is arbitrary. It follows from (\ref{eq5.5:MPAFL}), Remark \ref{rem: contin. of G_lambda}
and \cite[Lemma~3.1]{MR736974} that there exists at most one solution
to the martingale problem for $\mathcal{L}_{t}$ starting from $(s,x)$.
\qed

\bibliographystyle{amsplain}

\end{document}